\theoremstyle{plain}
\newtheorem{theorem}{Theorem}[section]
\newtheorem{proposition}[theorem]{Proposition}
\newtheorem{lemma}[theorem]{Lemma}
\theoremstyle{definition}
\newtheorem{definition}[theorem]{Definition}
\numberwithin{equation}{section}
\newcommand{\id}{\mathrm{id}}
\DeclareMathOperator{\Av}{\mathrm{Av}}
\newcommand{\sepp}{\,|\,} 							
\newcommand{\dbrac}[1]{{\llbracket #1 \rrbracket}} 	
\newcommand{\pattern}[4]{										
  \raisebox{0.6ex}{
  \begin{tikzpicture}[scale=0.35, baseline=(current bounding box.center), #1]
  \useasboundingbox (0.0,-0.1) rectangle (#2+1.4,#2+1.1);
    \foreach \x/\y in {#4}
      \fill[pattern color = black!65, pattern=north east lines] (\x,\y) rectangle +(1,1);
    \draw (0.01,0.01) grid (#2+0.99,#2+0.99);
    \foreach \x/\y in {#3}
      \filldraw (\x,\y) circle (6pt);
  \end{tikzpicture}}
}
\newcommand{\patternnl}[4]{										
  \raisebox{0.6ex}{
  \begin{tikzpicture}[scale=0.35, baseline=(current bounding box.center), #1]
  \useasboundingbox (0.85,-0.1) rectangle (#2+1.4,#2+1.1);
    \foreach \x/\y in {#4}
      \fill[pattern color = black!65, pattern=north east lines] (\x,\y) rectangle +(1,1);
    \foreach \x/\y in {#3}
      \filldraw (\x,\y) circle (6pt);
  \end{tikzpicture}}
}
\newcommand{\onetwo}{\patternnl{scale=0.2}{2}{1/1,2/2}{}}		
\newcommand{\imopattern}[6]{									
  \raisebox{0.6ex}{
  \begin{tikzpicture}[scale=0.35, baseline=(current bounding box.center), #1]
  \useasboundingbox (0.0,-0.1) rectangle (#2+1.4,#2+1.1);
    \foreach \x/\y in {#6}
      \fill[pattern color = black!65, pattern=north east lines] (\x,\y) rectangle +(1,1);
    \draw (0.01,0.01) grid (#2+0.99,#2+0.99);
    \foreach \x/\y in {#4}
      \draw[fill=white] (\x,\y) circle (6pt);
    \foreach \x/\y in {#5}
      \draw[fill=white] (\x,\y) circle (11pt);
    \foreach \x/\y in {#3}
      \filldraw (\x,\y) circle (6pt);
  \end{tikzpicture}}
}
\newcommand{\patternsbm}[5]{									
  \raisebox{0.6ex}{
  \begin{tikzpicture}[scale=0.35, baseline=(current bounding box.center), #1]
  \useasboundingbox (0.0,-0.1) rectangle (#2+1.4,#2+1.1);
    \foreach \x/\y in {#4}
      \fill[pattern color = black!65, pattern=north east lines] (\x,\y) rectangle +(1,1);
    \draw (0.01,0.01) grid (#2+0.99,#2+0.99);
    \foreach \x/\y/\z/\w/\A in {#5}
       \fill[color = white!100, opacity=1, rounded corners = 1.5pt] (\x+0.125,\y+0.125) rectangle (\z-0.125,\w-0.125);
    \foreach \x/\y/\z/\w/\A in {#5}
       \draw[color = black, rounded corners = 1.5pt] (\x+0.125,\y+0.125) rectangle (\z-0.125,\w-0.125);
    \foreach \x/\y/\z/\w/\A in {#5}
       \fill[black] (\x/2+\z/2,\y/2+\w/2) node {$\scriptstyle\A$};
    \foreach \x/\y in {#3}
      \filldraw (\x,\y) circle (6pt);

  \end{tikzpicture}}
}
\newcommand{\imopatternsbm}[7]{									
  \raisebox{0.6ex}{
  \begin{tikzpicture}[scale=0.35, baseline=(current bounding box.center), #1]
  \useasboundingbox (0.0,-0.1) rectangle (#2+1.4,#2+1.1);
    \foreach \x/\y in {#6}
      \fill[pattern color = black!65, pattern=north east lines] (\x,\y) rectangle +(1,1);
    \draw (0.01,0.01) grid (#2+0.99,#2+0.99);
    \foreach \x/\y/\z/\w/\A in {#7}
       \fill[color = white!100, opacity=1, rounded corners = 1.5pt] (\x+0.125,\y+0.125) rectangle (\z-0.125,\w-0.125);
    \foreach \x/\y/\z/\w/\A in {#7}
       \draw[color = black, rounded corners = 1.5pt] (\x+0.125,\y+0.125) rectangle (\z-0.125,\w-0.125);
    \foreach \x/\y/\z/\w/\A in {#7}
       \fill[black] (\x/2+\z/2,\y/2+\w/2) node {$\scriptstyle\A$};
    \foreach \x/\y in {#4}
      \draw[fill=white] (\x,\y) circle (6pt);
    \foreach \x/\y in {#5}
      \draw[fill=white] (\x,\y) circle (11pt);
    \foreach \x/\y in {#3}
      \filldraw (\x,\y) circle (6pt);
  \end{tikzpicture}}
}
\newcommand{\decpattern}[6]{									
  \raisebox{0.6ex}{
  \begin{tikzpicture}[scale=0.35, baseline=(current bounding box.center), #1]
  \useasboundingbox (0.0,-0.1) rectangle (#2+1.4,#2+1.1);
    \foreach \x/\y in {#4}
      {
      \fill[pattern color = black!65, pattern=north east lines] (\x,\y) rectangle +(1,1);
      }
    \draw (0.01,0.01) grid (#2+0.99,#2+0.99);
       
    \foreach \x/\y/\z/\w/\A in {#5}
       {
       \fill[color = white!100, opacity=1, rounded corners=1.5pt] (\x+0.125,\y+0.125) rectangle (\z-0.125,\w-0.125);
       \draw[color = black, rounded corners=1.5pt] (\x+0.125,\y+0.125) rectangle (\z-0.125,\w-0.125);
       \fill[black] (\x/2+\z/2,\y/2+\w/2) node {$\scriptstyle\A$};
       }
    \foreach \x/\y/\z/\w/\A in {#6}
       {
       \fill[color = white!100, opacity=1, rounded corners=1.5pt] (\x+0.125,\y+0.125) rectangle (\z-0.125,\w-0.125);
       \fill[pattern color = gray, pattern=north east lines, rounded corners=1.5pt] (\x+0.125,\y+0.125) rectangle (\z-0.125,\w-0.125);
       \draw[color = black, rounded corners=1.5pt] (\x+0.125,\y+0.125) rectangle (\z-0.125,\w-0.125);
       \fill[black] (\x/2+\z/2,\y/2+\w/2) node {$\scriptstyle\A$};
       }
    \foreach \x/\y in {#3}
      \filldraw (\x,\y) circle (6pt);

  \end{tikzpicture}}
}
\newcommand{\decpatternww}[8]{								
  \raisebox{0.6ex}{
  \begin{tikzpicture}[scale=0.35, baseline=(current bounding box.center), #1]
  \useasboundingbox (0.0,-0.1) rectangle (#2+1.4,#2+1.1);
    \foreach \x/\y in {#6}
      {
      \fill[pattern color = black!65, pattern=north east lines] (\x,\y) rectangle +(1,1);
      }
    \draw (0.01,0.01) grid (#2+0.99,#2+0.99);
       
    \foreach \x/\y/\z/\w/\A in {#7}
       {
       \fill[color = white!100, opacity=1, rounded corners=1.5pt] (\x+0.125,\y+0.125) rectangle (\z-0.125,\w-0.125);
       \draw[color = black, rounded corners=1.5pt] (\x+0.125,\y+0.125) rectangle (\z-0.125,\w-0.125);
       \fill[black] (\x/2+\z/2,\y/2+\w/2) node {$\scriptstyle\A$};
       }
    \foreach \x/\y/\z/\w/\A in {#8}
       {
       \fill[color = white!100, opacity=1, rounded corners=1.5pt] (\x+0.125,\y+0.125) rectangle (\z-0.125,\w-0.125);
       \fill[pattern color = gray, pattern=north east lines, rounded corners=1.5pt] (\x+0.125,\y+0.125) rectangle (\z-0.125,\w-0.125);
       \draw[color = black, rounded corners=1.5pt] (\x+0.125,\y+0.125) rectangle (\z-0.125,\w-0.125);
       \fill[black] (\x/2+\z/2,\y/2+\w/2) node {$\scriptstyle\A$};
       }
    \foreach \x/\y in {#4}
      \draw[fill=white] (\x,\y) circle (6pt);
    \foreach \x/\y in {#5}
      \draw[fill=white] (\x,\y) circle (11pt);
    \foreach \x/\y in {#3}
      \filldraw (\x,\y) circle (6pt);

  \end{tikzpicture}}
}
\newcommand{\patternlabelled}[5]{										
  \raisebox{0.6ex}{
  \begin{tikzpicture}[scale=0.35, baseline=(current bounding box.center), #1]
    \foreach \x/\y in {#4}
      \fill[pattern color = black!65, pattern=north east lines] (\x,\y) rectangle +(1,1);
    \draw (0.01,0.01) grid (#2+0.99,#2+0.99);
    \foreach \x/\y in {#3}
      \filldraw (\x,\y) circle (6pt);
    \draw (0.5,0.5) -- (0.5,-0.5);
    \fill[black] (0.5,-1) node {$\scriptstyle(0,0)$};
    \draw (0.5,2.5) -- (0.5,4.5);
    \fill[black] (0.5,5) node {$\scriptstyle(0,2)$};
    \draw (1.5,2.5) -- (1.5,6);
    \fill[black] (1.5,6.5) node {$\scriptstyle(1,2)$};
    \draw (2.5,2.5) -- (2.5,4.5);
    \fill[black] (2.5,5) node {$\scriptstyle(2,2)$};
    
  \end{tikzpicture}}
}
\newcommand{\vinc}[3]{
\begin{tikzpicture}[baseline, inner sep = 0mm]

	\begin{scope}[yshift = 3]
	
	\foreach \x/\y in {#2}
	{
		\node at (\x*0.2,-0.14)  [label=$\y$] {};  
	}
	
	\foreach \z in {#3}
	{
		\ifnum 0<\z
			\ifnum \z<#1
				\draw[thick] (\z*0.2-0.07,-0.17) -- (\z*0.2+0.27,-0.17);
			\fi
		\fi
		
		\ifnum 0=\z
			\draw[thick] (0.08,0.1) -- (0.08,-0.17) -- (0.26,-0.17);
		\fi
		
		\ifnum \z=#1
			\draw[thick] (\z*0.2+0.13,0.1) -- (\z*0.2+0.13,-0.17) -- (\z*0.2-0.05,-0.17);
		\fi
	}
	\end{scope}
\end{tikzpicture}
}
\newcommand{\vinci}[3]{
\begin{tikzpicture}[baseline, scale = 0.8, , inner sep = 0mm]

	\begin{scope}[yshift = 3]
	
	\foreach \x/\y in {#2}
	{
		\node at (\x*0.2,-0.14)  [label=$\scriptstyle{\y}$] {};  
	}
	
	\foreach \z in {#3}
	{
		\ifnum 0<\z
			\ifnum \z<#1
				\draw[thick] (\z*0.2-0.07,-0.17) -- (\z*0.2+0.27,-0.17);
			\fi
		\fi
		
		\ifnum 0=\z
			\draw[thick] (0.08,0.1) -- (0.08,-0.17) -- (0.26,-0.17);
		\fi
		
		\ifnum \z=#1
			\draw[thick] (\z*0.2+0.13,0.1) -- (\z*0.2+0.13,-0.17) -- (\z*0.2-0.05,-0.17);
		\fi
	}
	\end{scope}
\end{tikzpicture}
}
\newcommand{\bivinc}[4]{
\begin{tikzpicture}[baseline, inner sep = 0mm]

	\begin{scope}[yshift = -2]
	
	\foreach \x/\y in {#2}
	{
		\node at (\x*0.2,0.15)   [label=$\x$] {};  
		\node at (\x*0.2,-0.14)  [label=$\y$] {};  
	}
	
	\foreach \z in {#3}
	{
		\ifnum 0<\z
			\ifnum \z<#1
				\draw[thick] (\z*0.2-0.07,-0.17) -- (\z*0.2+0.27,-0.17);
			\fi
		\fi
		
		\ifnum 0=\z
			\draw[thick] (0.08,0.1) -- (0.08,-0.17) -- (0.21,-0.17);
		\fi
		
		\ifnum \z=#1
			\draw[thick] (\z*0.2+0.13,0.1) -- (\z*0.2+0.13,-0.17) -- (\z*0.2,-0.17);
		\fi
	}
	
	\foreach \z in {#4}
	{
		\ifnum 0<\z
			\ifnum \z<#1
				\draw[thick] (\z*0.2-0.07,0.47) -- (\z*0.2+0.27,0.47);
			\fi
		\fi
		
		\ifnum 0=\z
			\draw[thick] (0.08,0.21) -- (0.08,0.47) -- (0.21,0.47);
		\fi
		
		\ifnum \z=#1
			\draw[thick] (\z*0.2+0.13,0.21) -- (\z*0.2+0.13,0.47) -- (\z*0.2,0.47);
		\fi
	}
	\end{scope}
\end{tikzpicture}
}
\newcommand{\bivincs}[5]{
\begin{tikzpicture}[baseline, inner sep = 0mm]

	\begin{scope}[yshift = -2]

	\foreach \x/\w in {#2}
	{
		\node at (\x*0.2,-0.14)   [label=$\w$] {}; 
	}
	
	\foreach \x/\w in {#3}
	{
		\node at (\x*0.2,0.15) [label=$\w$] {}; 
	}
	
	\foreach \z in {#4}
	{
		\ifnum 0<\z
			\ifnum \z<#1
				\draw[thick] (\z*0.2-0.07,-0.17) -- (\z*0.2+0.27,-0.17);
			\fi
		\fi
		
		\ifnum 0=\z
			\draw[thick] (0.08,0.1) -- (0.08,-0.17) -- (0.26,-0.17);
		\fi
		
		\ifnum \z=#1
			\draw[thick] (\z*0.2+0.13,0.1) -- (\z*0.2+0.13,-0.17) -- (\z*0.2-0.05,-0.17);
		\fi
	}
	
	\foreach \z in {#5}
	{
		\ifnum 0<\z
			\ifnum \z<#1
				\draw[thick] (\z*0.2-0.07,0.47) -- (\z*0.2+0.27,0.47);
			\fi
		\fi
		
		\ifnum 0=\z
			\draw[thick] (0.08,0.21) -- (0.08,0.47) -- (0.26,0.47);
		\fi
		
		\ifnum \z=#1
			\draw[thick] (\z*0.2+0.13,0.21) -- (\z*0.2+0.13,0.47) -- (\z*0.2-0.05,0.47);
		\fi
	}
	\end{scope}
\end{tikzpicture}
}
\newcommand{\bivinci}[4]{
\begin{tikzpicture}[baseline, scale = 0.8, inner sep = 0mm]

	\begin{scope}[yshift = -2]
	
	\foreach \x/\y in {#2}
	{
		\node at (\x*0.2,0.15)   [label=$\scriptstyle{\x}$] {};  
		\node at (\x*0.2,-0.14)  [label=$\scriptstyle{\y}$] {};  
	}
	
	\foreach \z in {#3}
	{
		\ifnum 0<\z
			\ifnum \z<#1
				\draw[thick] (\z*0.2-0.07,-0.17) -- (\z*0.2+0.27,-0.17);
			\fi
		\fi
		
		\ifnum 0=\z
			\draw[thick] (0.08,0.1) -- (0.08,-0.17) -- (0.26,-0.17);
		\fi
		
		\ifnum \z=#1
			\draw[thick] (\z*0.2+0.13,0.1) -- (\z*0.2+0.13,-0.17) -- (\z*0.2-0.05,-0.17);
		\fi
	}
	
	\foreach \z in {#4}
	{
		\ifnum 0<\z
			\ifnum \z<#1
				\draw[thick] (\z*0.2-0.07,0.47) -- (\z*0.2+0.27,0.47);
			\fi
		\fi
		
		\ifnum 0=\z
			\draw[thick] (0.08,0.21) -- (0.08,0.47) -- (0.26,0.47);
		\fi
		
		\ifnum \z=#1
			\draw[thick] (\z*0.2+0.13,0.21) -- (\z*0.2+0.13,0.47) -- (\z*0.2-0.05,0.47);
		\fi
	}
	\end{scope}
\end{tikzpicture}
}
\begin{document}

\title[Describing West-3-stack-sortable permutations]{Describing West-3-stack-sortable permutations with permutation patterns}

\author[\'Ulfarsson]{Henning \'Ulfarsson}

\address{School of Computer Science, Reykjav\'ik University, Menntavegi 1, 101 Reykjav\'ik, \mbox{Iceland}}

\email{henningu@ru.is}


\keywords{Patterns, Permutations, Sorting}

\begin{abstract}
We describe a new method for finding patterns in permutations that produce a given pattern after the permutation has been passed once through a stack. We use this method to describe West-$3$-stack-sortable permutations, that is, permutations that are sorted by three passes through a stack. We also show how the method can be applied to the bubble-sort operator. The method requires the use of mesh patterns, introduced by Br{\"a}nd{\'e}n and Claesson (2011), as well as a new type of generalized pattern we call a decorated pattern.
\end{abstract}
\maketitle

\setcounter{tocdepth}{1}
\tableofcontents

\section{Introduction}
A \emph{permutation} is a one-to-one correspondence from a finite set $\{1,\dotsc,n\}$ to itself. Permutations will be written in one-line notation, so the permutation $2314$ maps $1$ to $2$, $2$ to $3$ and so on. The number of letters in a permutation will be called its \emph{length} and the set of all permutations of length $n$ is denoted $S_n$. The identity permutation $12\dotsm n$ will be denoted $\id_n$, or just $\id$ if $n$ is understood from the context, or irrelevant. For integers $a < b$ we use $\dbrac{a,b}$ to denote the set $\{a, a+1, \dotsc, b\}$.

In the 1970's Knuth~\cite{MR0378456} initiated the study of sorting and pattern avoidance in permutations. He considered the problem of sorting a permutation by passing it through a stack. A \emph{stack} is a structure that can store elements from a permutation. We can \emph{push} onto the top of the stack and elements are \emph{popped} out to the output. Consider trying to sort the permutation $231$ by a stack, as shown in Figure \ref{fig:231stack}.

\begin{figure}[htbp]
\begin{center}

\begin{tikzpicture}
 [scale = 0.225, bend angle=37.5, pre/.style={<-,shorten <=1pt,semithick},
   post/.style={->,shorten >=1pt,semithick}]
  
 \begin{scope}[xshift=54cm]
 \draw [black,thick] (-4,-0.35) -- (-1,-0.35) -- (-1,-3.35) -- (1,-3.35) -- (1,-0.35) -- (4,-0.35);
 \node[] at (0,-2.75) (stack1) {};
 
 \node[] at (2,0.35) (first) {\small 2}
 	edge [post,in=90, out=180]		(stack1);
 \node[] at (2.75,0.35) (second) {\small 3};
 \node[] at (3.5,0.35) (third) {\small 1};
 \end{scope}
 
 \begin{scope}[xshift=45cm]
 \draw [black,thick] (-4,-0.35) -- (-1,-0.35) -- (-1,-3.35) -- (1,-3.35) -- (1,-0.35) -- (4,-0.35);
 \node[] at (-2,0.45) (out1) {};
 
 \node[] at (2,0.35) (first) {\small 3};
 \node[] at (2.75,0.35) (second) {\small 1};
 
 \node[] at (0,-2.65) (stack1) {\small 2};
 \node[] at (0,-2.1) {} edge [post, in=0, out=90] (-1.7,0.25);
 \end{scope}
 
 \begin{scope}[xshift=36cm]
 \draw [black,thick] (-4,-0.35) -- (-1,-0.35) -- (-1,-3.35) -- (1,-3.35) -- (1,-0.35) -- (4,-0.35);
 \node[] at (0,-2.75) (stack1) {};
 
 \node[] at (2,0.35) (first) {\small 3}
 	edge [post,in=90, out=180]		(stack1);
 \node[] at (2.75,0.35) (second) {\small 1};
 
 \node[] at (-2,0.35) (out1) {\small 2};
 \end{scope}
 
 \begin{scope}[xshift=27cm]
 \draw [black,thick] (-4,-0.35) -- (-1,-0.35) -- (-1,-3.35) -- (1,-3.35) -- (1,-0.35) -- (4,-0.35);
 \node[] at (0,-2.25) (stack2) {};
  
 \node[] at (2,0.35) (first) {\small 1}
 	edge [post,in=90, out=180]		(stack2);
 
 \node[] at (0,-2.65) (stack1) {\small 3};
 
 \node[] at (-2,0.35) (out1) {\small 2};
 \end{scope}
 
 \begin{scope}[xshift=18cm]
 \draw [black,thick] (-4,-0.35) -- (-1,-0.35) -- (-1,-3.35) -- (1,-3.35) -- (1,-0.35) -- (4,-0.35);
 \node[] at (-2,0.25) (out1) {\small 2};
 
 \node[] at (0,-2.65) (stack1) {\small 3};
 \node[] at (0,-1.45) (stack2) {\small 1};
 \node[] at (0,-1.25) {} edge [post,in=0, out=90] (-1.7,0.25);
 \end{scope}
 
 \begin{scope}[xshift=9cm]
 \draw [black,thick] (-4,-0.35) -- (-1,-0.35) -- (-1,-3.35) -- (1,-3.35) -- (1,-0.35) -- (4,-0.35);
 \node[] at (-2,0.35) (out1) {\small 1};
 
 \node[] at (0,-2.65) (stack1) {\small 3};
 
 \node[] at (0,-2.1) {} edge [post, in=0, out=90] (-1.7,0.25);
 
 \node[] at (-2.75,0.35) (out2) {\small 2};
 \end{scope}
 
 \begin{scope}[xshift=0cm]
 \draw [black,thick] (-4,-0.35) -- (-1,-0.35) -- (-1,-3.35) -- (1,-3.35) -- (1,-0.35) -- (4,-0.35);
 
 \node[] at (-2,0.35) (out1) {\small 3};
 \node[] at (-2.75,0.35) (out2) {\small 1};
 \node[] at (-3.5,0.35) (out3) {\small 2};
 \end{scope}
  
\end{tikzpicture}

\caption{Trying, and failing, to sort $231$ with a stack. The figure is read from right to left}
\label{fig:231stack}
\end{center}
\end{figure}
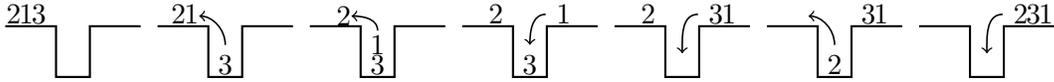

Note that we always want the elements in the stack to be increasing, from the top, since otherwise it would be impossible for the output to be sorted.  We failed to sort the permutation in one pass through the stack and therefore say that it is not \emph{stack-sortable}. Knuth~\cite{MR0378456} showed that a permutation is stack-sortable if and only if it avoids $231$ as a pattern.
We will reprove this below in Theorem~\ref{thm:Knuth}. Several variations on Knuth's original problem have been considered, such as changing the way the stack operates, and/or adding more sorting devices, see B\'ona~\cite{MR2028290} for a survey. In this paper we will consider two variations: repeatedly passing a permutation through a stack and the so-called bubble-sort operator. We introduce a new method for finding patterns in a permutation that will cause these sorting devices to output a given pattern. If the given pattern we want to be outputted is a classical pattern (defined below) we show that the mesh patterns introduced by Br\"and\'en and Claesson~\cite{BC11} suffice, but if the given pattern is itself a mesh pattern we will need to introduce a new kind of generalized pattern we call a \emph{decorated} pattern.

In the next section we review some literature on generalized permutation patterns and introduce a new generalization.

\section{Generalized permutation patterns}
A \emph{standardization} of a list of numbers is another list of the same length such that the smallest letter in the original list has been replaced with $1$, the second smallest with $2$, and so on. The standardization of $5371$ is $3241$. A \emph{classical} (\emph{permutation}) \emph{pattern} is a permutation $p$ in $S_k$. A permutation $\pi$ in $S_n$ \emph{contains}, or \emph{has an occurrence of}, the pattern $p$ if there are indices $1 \leq i_1 < \dotsb < i_k \leq n$ such that the standardization of $\pi(i_1) \dotsm \pi(i_k)$ equals the pattern $p$. If a permutation does not contain a pattern $p$ we say that it \emph{avoids} the pattern $p$.
The permutation $\pi = 526413$ contains the pattern $p = 132$, and has three occurrences of it, given by the subsequences $264$, $263$ and $243$. We can draw the \emph{graph} of the permutation by graphing the coordinates $(i,\pi(i))$ on a grid. For example the permutation $\pi$ above is shown in Figure \ref{fig:perm526413} where we have additionally circled the occurrences of the pattern $p$.
\begin{figure}[htbp]
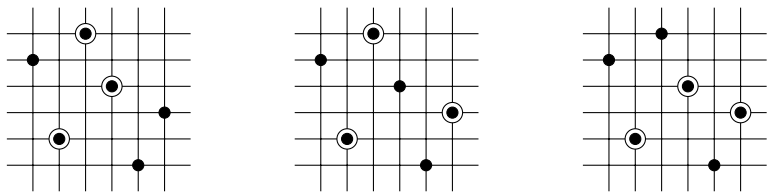

\begin{center}
\imopattern{scale=1}{ 6 }{ 1/5, 2/2, 3/6, 4/4, 5/1, 6/3 }
{}{ 2/2, 3/6, 4/4 }{}\qquad
\imopattern{scale=1}{ 6 }{ 1/5, 2/2, 3/6, 4/4, 5/1, 6/3 }
{}{ 2/2, 3/6, 6/3 }{}\qquad
\imopattern{scale=1}{ 6 }{ 1/5, 2/2, 3/6, 4/4, 5/1, 6/3 }
{}{ 2/2, 4/4, 6/3 }{}
\caption{The permutation $526413$ and three occurrences of the pattern $132$}
\label{fig:perm526413}
\end{center}
\end{figure}

The same permutation avoids the pattern $123$, since we can not find an increasing subsequence of length three in it.

Classical patterns form the base of a hierarchy of generalizations, shown in Figure \ref{fig:hierarchy}.
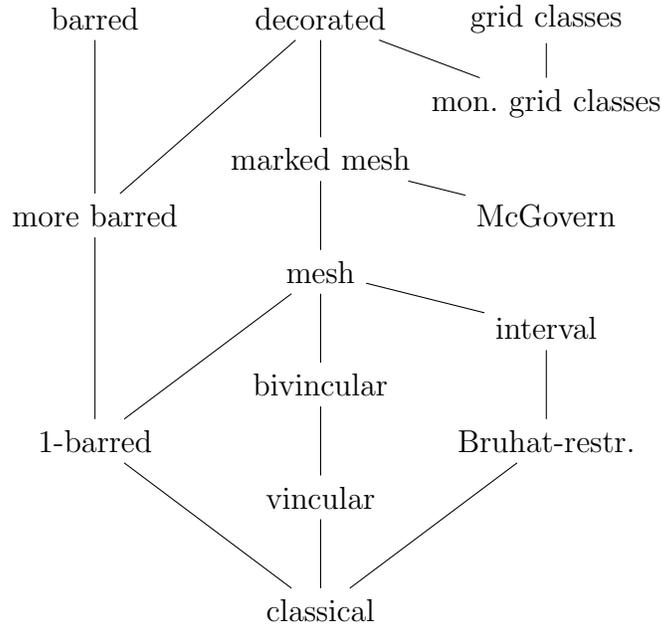
\begin{figure}[htbp]
\begin{center}
\begin{tikzpicture}
 [scale = 0.75, place/.style = {circle,draw = green!50,fill = green!20,thick,minimum size = 5pt},auto]
 
 \node[] at (0,0) (cl) {classical};
 \node[] at (0,2) (vin) {vincular};
 \node[] at (0,4) (biv) {bivincular};
 \node[] at (-4,3) (1bar) {$1$-barred};
 \node[] at (-4,7) (mbar) {more barred};
 \node[] at (-4,10.5) (bar) {barred};
 \node[] at (0,6) (m) {mesh};
 \node[] at (4,5) (inter) {interval};
 \node[] at (4,3) (Bru) {Bruhat-restr.};
 \node[] at (0,8) (mm) {marked mesh};
 \node[] at (4,7) (McG) {McGovern};
 
 \node[] at (0,10.5) (mp) {decorated};
 \node[] at (4,9) (mgrid) {mon.\ grid classes};
 \node[] at (4,10.5) (grid) {grid classes};

 \draw[-] (vin) to (cl);
 \draw[-] (biv) to (vin);
 \draw[-] (1bar) to (cl);
 \draw[-] (1bar) to (mbar);
 \draw[-] (bar) to (mbar);
 \draw[-] (m) to (1bar);
 \draw[-] (m) to (biv);
 \draw[-] (inter) to (Bru);
 \draw[-] (Bru) to (cl);
 \draw[-] (m) to (inter);
 \draw[-] (mm) to (m);
 \draw[-] (mm) to (McG);
 
 \draw[-] (mm) to (mp);
 \draw[-] (mp) to (mbar);
 \draw[-] (mp) to (mgrid);
 \draw[-] (grid) to (mgrid);
 
\end{tikzpicture}
\caption{The hierarchy of generalizations of classical patterns}
\label{fig:hierarchy}
\end{center}
\end{figure}

We will describe in detail the mesh, marked mesh and decorated patterns, as well as barred patterns, as these will be the generalizations we need. We describe the others briefly below and give references for them.

\subsection{Mesh patterns and barred patterns}
Mesh patterns were introduced in \cite{BC11}. We review them via an example.
The mesh pattern
\begin{equation}\label{eq:firstmesh}
\pattern{scale=1}{ 3 }{ 1/1, 2/3, 3/2 }{0/2,1/2,2/2} \tag{$\star$}
\end{equation}
occurs in a permutation if we can find the underlying classical pattern $132$ positioned in such a way that the shaded regions are not occupied by other entries in the permutation. Consider the permutation $526413$. From above we know that the classical pattern has three occurrences in this permutation. In Figure \ref{fig:perm526413mesh} one can see that just one of these satisfies the additional requirement that there be no additional entries in the shaded region ``between and to the left of the $3$ and the $2$''.
\begin{figure}[htbp]
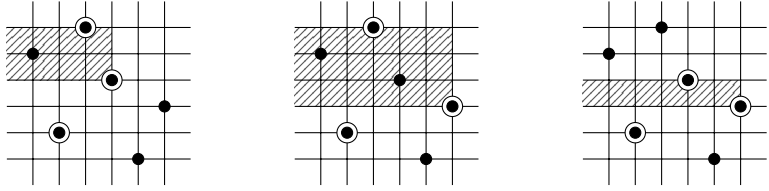

\begin{center}
\imopattern{scale=1}{ 6 }{ 1/5, 2/2, 3/6, 4/4, 5/1, 6/3 }
{}{ 2/2, 3/6, 4/4 }{0/4,0/5,1/4,1/5,2/4,2/5,3/4,3/5}\qquad
\imopattern{scale=1}{ 6 }{ 1/5, 2/2, 3/6, 4/4, 5/1, 6/3 }
{}{ 2/2, 3/6, 6/3 }{0/3,0/4,0/5,1/3,1/4,1/5,2/3,2/4,2/5,3/3,3/4,3/5,4/3,4/4,4/5,5/3,5/4,5/5}\qquad
\imopattern{scale=1}{ 6 }{ 1/5, 2/2, 3/6, 4/4, 5/1, 6/3 }
{}{ 2/2, 4/4, 6/3 }{0/3,1/3,2/3,3/3,4/3,5/3}
\caption{The permutation $526413$ and one occurrence of the mesh pattern \eqref{eq:firstmesh} above}
\label{fig:perm526413mesh}
\end{center}
\end{figure}

Another way of writing a mesh pattern is to give the underlying classical pattern, followed by the set of shaded boxes, labelled by their lower left corner (the left-most box in the bottom-most row being $(0,0)$). The mesh pattern we considered here is $(132,\{(0,2),(1,2),(2,2)\})$ and any classical pattern $p$ can be written as the mesh pattern $(p,\emptyset)$.
\begin{figure}[htbp]
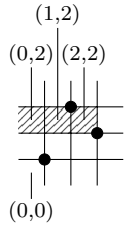

\begin{center}
\patternlabelled{scale=1}{ 3 }{ 1/1, 2/3, 3/2 }{0/2,1/2,2/2}{0/0}
\caption{The mesh pattern $(132,\{(0,2),(1,2),(2,2)\})$}
\label{fig:labelling}
\end{center}
\end{figure}

\subsubsection*{Describing the simple permutations with mesh patterns}

Recall that an \emph{interval} in a permutation is a set of entries that is consecutive in positions and values. An interval is \emph{trivial} if consists of one letter, or is the entire permutation. The elements $4653$ form a nontrivial interval in the permutation $28465317$.
\[
\imopattern{scale=1}{ 8 }{ 1/2, 2/8, 3/4, 4/6, 5/5, 6/3, 7/1, 8/7 }
{}{ 3/4, 4/6, 5/5, 6/3 }{}
\]
A permutation without nontrivial intervals is called \emph{simple}. Simple permutations have been shown to be useful in the study of classical pattern
classes, see e.g., Albert and Atkinson~\cite{MR2170110} and Brignall~\cite{MR2732823}.

Notice that the left-most, right-most, highest and lowest elements (hereafter called the \emph{boundary} elements) in the interval $4653$ form an occurrence of the mesh pattern
\[
\pattern{scale=1}{ 3 }{ 1/2, 2/3, 3/1 }{0/1,0/2,1/0,1/3,2/0,2/3,3/1,3/2}.
\]
This leads to the following proposition.
\begin{proposition}
A permutation is simple if and only if it avoids each of the patterns below, as well as their symmetries generated by the symmetries of the square.
\begin{center}
\begin{tabular}{ccc}
\pattern{scale=1}{3}{1/3, 2/1, 3/2}{0/1, 1/1, 2/0, 2/2, 2/3, 3/1} &
\pattern{scale=1}{3}{1/1, 2/2, 3/3}{3/1, 0/1, 1/0, 1/2, 1/3, 2/1} &  \\
\noalign{\smallskip}\noalign{\smallskip}\noalign{\smallskip}
\pattern{scale=1}{4}{1/4, 2/2, 3/1, 4/3}{0/1,1/1,0/2,1/2,
2/0,3/0, 4/1,4/2, 2/3,2/4,3/3,3/4} &
\pattern{scale=1}{4}{1/2, 2/1, 3/3, 4/4}{4/1,0/1,4/2,0/2,
1/0,2/0, 3/1,3/2, 1/3,1/4,2/3,2/4} &
\pattern{scale=1}{4}{1/1, 2/3, 3/2, 4/4}{0/2,1/2,0/3,1/3,
2/1,3/1, 4/2,4/3, 2/4,2/0,3/4,3/0} \\
\noalign{\smallskip}\noalign{\smallskip}\noalign{\smallskip}
\pattern{scale=1}{5}{1/5, 2/2, 3/1, 4/4, 5/3}{0/1,1/1,0/2,1/2,0/3,1/3, 2/0,3/0,4/0, 5/1,5/2,5/3, 2/4,2/5,3/4,3/5,4/4,4/5} &
\pattern{scale=1}{5}{1/2, 2/1, 3/4, 4/3, 5/5}{5/1,0/1,5/2,0/2,5/3,0/3, 1/0,2/0,3/0, 4/1,4/2,4/3, 1/4,1/5,2/4,2/5,3/4,3/5} &
\pattern{scale=1}{5}{1/5, 2/3, 3/1, 4/4, 5/2}{0/1,1/1,0/2,1/2,0/3,1/3, 2/0,3/0,4/0, 5/1,5/2,5/3, 2/4,2/5,3/4,3/5,4/4,4/5}
\end{tabular}
\end{center}
\end{proposition}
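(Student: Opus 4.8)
The plan is to prove both directions through the dictionary ``$\pi$ is not simple $\iff$ $\pi$ has a nontrivial interval,'' translating the presence of an interval into the presence of one of the displayed mesh patterns. Throughout I will use that the eight symmetries of the square act on patterns by reverse, complement and inverse (and their compositions), that this action preserves both simplicity and the property of being an interval, and that it carries each displayed pattern (shading included) to another pattern; hence it suffices to analyse one representative of each orbit, which is why the statement lists only eight patterns.

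For the easy direction (containment of a pattern $\Rightarrow$ not simple) I would argue pattern by pattern. Each displayed classical pattern is itself non-simple, and in each I would single out a designated nontrivial interval $J$ among its points. The role of the shading is exactly that, in any permutation $\pi$ containing the mesh pattern, no entry of $\pi$ can fall strictly between the chosen entries in position or in value along the four sides of $J$; this is precisely the condition that the images of the entries of $J$ again form an interval of $\pi$, while the remaining (witness) points guarantee that this interval is proper. Running through the two length-$3$, three length-$4$ and three length-$5$ representatives is then a finite, routine verification, and the square symmetries extend it to the whole list.

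For the main direction (not simple $\Rightarrow$ containment) I would start from a nontrivial interval $I$, occupying consecutive positions $\dbrac{a,b}$ and consecutive values $\dbrac{c,d}$, chosen minimal by inclusion. Its four \emph{boundary} entries --- leftmost, rightmost, highest, lowest --- satisfy exactly the emptiness conditions that nothing outside the block lies in the intervening rows to its left and right, or in the intervening columns above and below it; so they form an occurrence of the full-frame mesh pattern displayed just before the statement. The organizing principle is the number $k\in\{2,3,4\}$ of \emph{distinct} boundary entries (leftmost $\neq$ rightmost and highest $\neq$ lowest always hold). I would then build the detecting pattern from this boundary core, adjoining external entries as needed: such entries are available because $I$ is proper, and they are required both to force properness into the pattern and, when the boundary core is one of the two simple permutations $2413$ or $3142$ --- which cannot be listed on their own without flagging themselves --- to turn the configuration into a non-simple one. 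Tracking how many witnesses each value of $k$ and each coincidence among the boundary entries demands is what produces representatives of length $3$, $4$ and $5$; reducing by the square symmetries then collapses these to the eight displayed patterns.

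The crux is the bookkeeping of this case analysis rather than any individual computation, and the two genuine obstacles are \emph{properness} and \emph{completeness}. For properness, a full-frame occurrence that happens to exhaust all of $\pi$ certifies only the trivial interval, so every pattern must encode at least one entry lying outside the block; making this uniform across all cases is exactly what forces the extra points and is the real source of the jump from three to four and five points. For completeness and nonredundancy, I must check that, after applying the dihedral group, the eight representatives account for every combination of coinciding boundary entries and every placement of the witnesses relative to the four sides of the block, and that none of the patterns is satisfied by a simple permutation --- equivalently, that each displayed shading is the sharp one, neither over- nor under-constraining. The minimality of $I$ is the tool that keeps the witness analysis finite and pins down the shadings, and verifying that those shadings are exactly right is the step that will demand the most care.
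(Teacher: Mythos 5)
Your proposal is correct and takes essentially the same approach as the paper's (two-sentence) proof: non-simplicity is equivalent to having a nontrivial interval, the interval's boundary elements (leftmost, rightmost, highest, lowest, giving $2$, $3$ or $4$ distinct points) together with the frame shading yield one of the listed patterns up to the symmetries of the square, and the single extra point witnesses that the interval is proper. One small imprecision to fix: in your forward direction the images of the entries of $J$ need not themselves form an interval of $\pi$ --- the shading empties only the frame, not the interior of the spanned rectangle --- so the nontrivial interval certified by an occurrence is the set of \emph{all} entries of $\pi$ lying in that rectangle.
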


\begin{proof}
A permutation is not simple if and only if it has a non-trivial interval, and any interval must have boundary elements satisfying one of the mesh patterns above, or their symmetries. The element in the patterns that is not one of the boundary elements is added to ensure that the interval is not the entire permutation, and thus trivial.
\end{proof}

\emph{Barred} patterns were introduced by West~\cite{W90}. A barred pattern is a classical pattern with bars over some of the entries. Such a pattern is contained in a permutation if the standardization of the unbarred entries is contained in the permutation in such a way that they are not part of an occurrence of the whole barred pattern. This is best explained by considering examples. The barred pattern $3\bar{5}241$ does not occur in the permutation $416352$, since there is only one occurrence of the classical pattern we get from the unbarred entries, in the subsequence $4352$, and this
occurrence is part of an occurrence of $35241$. The permutation $5264173$ does contain this barred pattern, in the subsequence $5473$, since that is an occurrence of $3241$ that is not part of an occurrence of $35241$. In \cite{BC11} it was shown that any barred pattern with one barred entry is a mesh pattern. The barred pattern we discussed here is in fact the mesh pattern
\[
\pattern{scale=1}{ 4 }{ 1/3, 2/2, 3/4, 4/1 }{1/4}.
\]
This explains the edge from barred patterns with one bar (``$1$-barred'') to mesh patterns (``mesh'') in Figure \ref{fig:hierarchy}.

\subsection{Marked mesh patterns}
Marked mesh patterns were introduced by the author in \cite{U11} and used in joint work with Woo in \cite{UW11} in the characterization of local complete intersection Schubert varieties. They give finer control over whether a certain region in a permutation is allowed to contain elements, and if so, how many. Again we just give an example.
Consider the marked mesh pattern below.
\[
\patternsbm{scale=1}{ 3 }{ 1/1, 2/3, 3/2}{2/2}{1/0/3/2/\scriptscriptstyle{1}}
\]
The meaning of the $1$ in the region containing boxes $(1,0)$, $(1,1)$, $(2,0)$ and $(2,1)$ is that this region must contain \emph{at least} one entry. In Figure \ref{fig:perm526413marked} we see that there is exactly one occurrence of this mesh pattern in the permutation $526413$.
\begin{figure}[htbp]
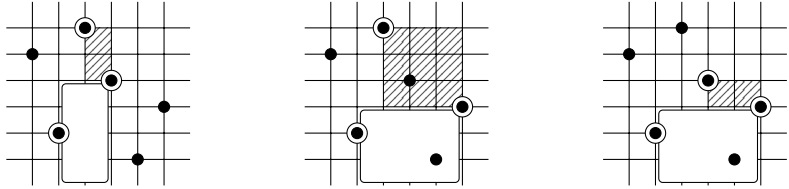

\begin{center}
\imopatternsbm{scale=1}{ 6 }{ 1/5, 2/2, 3/6, 4/4, 5/1, 6/3 }
{}{ 2/2, 3/6, 4/4 }{3/4,3/5}{2/0/4/4/{}} \qquad
\imopatternsbm{scale=1}{ 6 }{ 1/5, 2/2, 3/6, 4/4, 5/1, 6/3 }
{}{ 2/2, 3/6, 6/3 }{3/3,3/4,3/5,4/3,4/4,4/5,5/3,5/4,5/5}{2/0/6/3/{}} \qquad
\imopatternsbm{scale=1}{ 6 }{ 1/5, 2/2, 3/6, 4/4, 5/1, 6/3 }
{}{ 2/2, 4/4, 6/3 }{4/3,5/3}{2/0/6/3/{}}
\caption{The permutation $526413$ and one occurrence of a marked mesh pattern}
\label{fig:perm526413marked}
\end{center}
\end{figure}

Marked mesh patterns will be useful below when we need to add elements into an existing pattern to ensure other elements are popped out of a particular sorting device.

\subsection{Decorated patterns}
Below we will need even finer control over what is allowed inside a particular region in a pattern. We will need to control whether the entries in the region avoid a particular pattern. Consider for example the decorated pattern
\[
\decpattern{scale=1.5}{ 2 }{1/2, 2/1}{}{}{1/1/2/2/\onetwo}.
\]
The decorated region in the middle signifies that an occurrence of this pattern should be an occurrence of the underlying classical pattern $21$ that additionally does not have entries in the region that contain the pattern $12$ -- or equivalently -- whatever is in that region must be in descending order, from left to right. In Figure \ref{fig:perm526413decorated} there is an occurrence of the decorated pattern on the left and on the right we have an occurrence of the classical pattern $21$ that does not satisfy the requirements of the decorated region.
\begin{figure}[htbp]
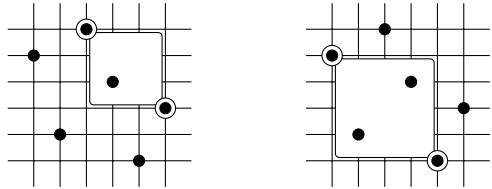

\begin{center}
\imopatternsbm{scale=1}{ 6 }{ 1/5, 2/2, 3/6, 4/4, 5/1, 6/3 }
{}{ 3/6, 6/3 }{}{3/3/6/6/{}} \qquad
\imopatternsbm{scale=1}{ 6 }{ 1/5, 2/2, 3/6, 4/4, 5/1, 6/3 }
{}{ 1/5, 5/1 }{}{1/1/5/5/{}}
\caption{The permutation $526413$ and one occurrences of a decorated pattern}
\label{fig:perm526413decorated}
\end{center}
\end{figure}

Below we state the formal definition of a decorated pattern.
\begin{definition}
A \emph{decorated pattern} $(p,\mathcal{C})$ of length $k$ consists of a classical pattern
$p$ of length $k$ and a collection $\mathcal{C}$ which contains pairs $(C, q)$
where $C$ is a subset of the square $\dbrac{0,k} \times \dbrac{0,k}$ and $q$ is some pattern,
possibly another decorated pattern. An occurrence of $(p,\mathcal{C})$ in a permutation $\pi$
is a subset $\omega$
of the diagram $G(\pi) = \{ (i,\pi(i)) \sepp 1 \leq i \leq n \}$ such that there are order-preserving
injections $\alpha, \beta: \dbrac{1,k} \to \dbrac{1,n}$ satisfying two conditions:
\begin{enumerate}
\item $\omega = \{(\alpha(i),\beta(j)) \,\colon (i,j) \in G(p)\}$.
\item Let $R_{ij} = \dbrac{\alpha(i)+1, \alpha(i+1)-1} \times \dbrac{\beta(j)+1,  \beta(j+1) -1}$,
with $\alpha(0) = 0 = \beta(0)$ and $\alpha(k+1) = n+1 = \beta(k+1)$. For each pair $(C,q)$ we let
$C' = \bigcup_{(i,j) \in C} R_{ij}$ and require that
\[
C' \cap G(\pi) \text{ avoids } q.
\]
\end{enumerate}
\end{definition}

\subsubsection*{More bars in more places} We noted above that any barred pattern with just one barred entry can be translated into a mesh pattern. If there is more than one bar, on entries that are adjacent in the pattern, this is no longer true. But we can still translate these types of barred patterns into decorated patterns. For example the barred pattern $1\bar{2}\bar{3}4$ is equivalent to the decorated pattern
\[
\decpattern{scale=1.5}{ 2 }{1/1,2/2}{}{}{1/1/2/2/\onetwo}.
\]
There are other barred patterns, such as $\bar{1}3\bar{2}4$, that can not be expressed as decorated patterns.

\subsection{Other generalizations}
Vincular patterns, sometimes called dashed patterns, were defined by Babson and Steingr\'imsson~\cite{MR1758852} and are mesh patterns where shaded regions must be vertical strips. Bivincular patterns were defined by Bousquet-M\'elou, Claesson, Dukes and Kitaev~\cite{BCDK10}, and are mesh patterns where shaded regions can be either vertical or horizontal strips.

Bruhat-restricted patterns and interval patterns were defined by Yong and Woo in \cite{MR2264071} and \cite{MR2422304}, and have applications to the study of singularities of Schubert varieties. In \cite{U11} the author showed that interval patterns are a special case of mesh patterns.
McGovern~\cite{MR2833467} showed that classical patterns, with the extra restriction that occurrences must be a union of cycles, have applications to geometry as well. It is shown
in \cite{U11} that this generalization is subsumed by marked mesh patterns.

Profile classes were introduced in Murphy and Vatter~\cite{MR2028286}, and later called
monotone grid classes in Huczynska and Vatter~\cite{MR2240760}. It is not hard to show that a permutation is a member of a particular monotone grid class if and only if it contains one pattern from a finite list of decorated patterns.

\section{Finding preimages of patterns}
In this section we define a method for describing patterns that are guaranteed to produce a given pattern in a permutation after it is sorted by a stack or with the bubble-sort operator.

\subsection{The stack-sort operator} \label{subsec:stacksortop}
For a permutation $\pi$ we denote with $S(\pi)$ the image of $\pi$ after it is passed once through a stack. Recall that a permutation is stack-sortable if and only if $S(\pi) = \id$, the identity permutation. Another way to state this is that a permutation $\pi$ in $S_n$ is stack-sortable if and only if $S(\pi) \in \Av_n(21)$, where $\Av_n(21)$ is the set of permutations of length $n$ that avoid $21$. Of course $\Av_n(21) = \{\id\}$ but framing the question like this leads to a generalization: Given a pattern $p$, what conditions need to be put on $\pi$ such that $S(\pi) \in \Av_n(p)$. Equivalently, we can ask for a description of $S^{-1}(\Av_n(p))$.

Below we call permutations such that $S^k(\pi) = \id$, \emph{West-$k$-stack-sortable} permutations, since West considered this generalization from the case of one stack first.
Note that for $k > 1$ these permutations are different from the \emph{$k$-stack-sortable} permutations,
which are the permutations that can be sorted by using $k$ stacks in series without the
requirement that the permutation completely passes through one stack at a time. For example, the permutation $2341$ is not West-$2$-stack-sortable, but if we
put the entries $2,3,4$ onto the first stack, pass $1$ all the way to the end, and then use
the second stack to sort $2,3,4$ we end up with $1234$. So $2341$ is $2$-stack-sortable.
See also Open Problem \eqref{opprob:2stack} in Section \ref{sec:opprob} below.

The basic idea behind the method we are about to describe is that $S(\pi)$ has an occurrence of a pattern $p$ of length $k$ if and only if the $k$ elements making up this occurrence were present in $\pi$ as some kind of pattern before we sorted. This is of course a complete tautology but will get us pretty far. We start by showing how this idea allows us to describe stack-sortable permutations, as well as West-$2$-stack sortable permutations.

\subsubsection*{Stack-sortable permutations}
We know that $\pi$ is not sorted by the stack if and only if $S(\pi)$ contains the classical pattern $21$. Therefore consider a particular occurrence of this pattern in $S(\pi)$. Before sorting, the elements in this occurrence must have been an occurrence the pattern
\[
21 = \pattern{scale=1}{ 2 }{ 1/2, 2/1 }{}
\]
in $\pi$. In order to remain in this order the element corresponding to $2$ must be popped off the stack by a larger element before the element corresponding to $1$ enters. Thus the box $(1,2)$ must be occupied by at least one element and we have an occurrence of the marked mesh pattern
\[
\patternsbm{scale=1}{ 2 }{ 1/2, 2/1}{}{1/2/2/3/\scriptscriptstyle{1}}
\]
which is equivalent to the classical pattern
$
\pattern{scale=0.75}{ 3 }{ 1/2, 2/3, 3/1 }{}.
$
We have therefore reproven Knuth's result.
\begin{theorem}[Knuth~\cite{MR0378456}] \label{thm:Knuth}
A permutation $\pi$ is stack-sortable, i.e., $S(\pi) = \id$, if and only if it avoids the pattern $231$.
\end{theorem}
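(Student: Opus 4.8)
The plan is to reduce the claim to a statement purely about when the output $S(\pi)$ contains the classical pattern $21$, and then to read off the preimage pattern from the stack dynamics, exactly as in the heuristic preceding the theorem. First I would observe that $\pi$ is stack-sortable, i.e. $S(\pi)=\id$, if and only if $S(\pi)$ avoids $21$, since the only permutation of a given length that avoids $21$ is the identity; thus $S(\pi)\in\Av_n(21)$ is equivalent to $S(\pi)=\id$. Consequently it suffices to prove the equivalence that \emph{$S(\pi)$ contains $21$ if and only if $\pi$ contains $231$}, and the theorem follows by taking contrapositives.

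For the forward direction I would fix an occurrence of $21$ in $S(\pi)$, say two entries $a>b$ with $a$ output before $b$, and track their history in the stack. Throughout the sort the stack contents are increasing from top to bottom, so if $b$ preceded $a$ in $\pi$ then $b$ and every entry above it are smaller than $a$ and are popped when $a$ is read; hence $b$ would leave before $a$, contradicting the assumed output order. Therefore $a$ precedes $b$ in $\pi$. Since $a>b$ yet $a$ leaves the stack before $b$ is even pushed, $a$ cannot be removed in the final emptying of the stack, so it is popped during the reading phase by some entry $c$ read strictly between $a$ and $b$ with $c>a$. The three entries $a,c,b$ then occur in this positional order with values (middle, largest, smallest), so they standardize to $231$. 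This is precisely the content of the marked mesh pattern with underlying classical pattern $21$ and the box $(1,2)$ required to be nonempty, which the discussion above identifies with the classical pattern $231$.

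For the converse I would start from an occurrence $\pi(i),\pi(j),\pi(k)$ of $231$, so $\pi(k)<\pi(i)<\pi(j)$ with $i<j<k$, and argue that $\pi(i)$ leaves the stack before $\pi(k)$ ever enters it. When the larger entry $\pi(j)$ is read, the stack is popped while its top is smaller than $\pi(j)$; since $\pi(i)<\pi(j)$ and every entry sitting above $\pi(i)$ is smaller still, $\pi(i)$ is forced out no later than the processing of $\pi(j)$ (and a fortiori if it had already been popped earlier). As $\pi(k)$ is read only afterwards, $\pi(i)$ precedes $\pi(k)$ in $S(\pi)$, and because $\pi(i)>\pi(k)$ this is an occurrence of $21$.

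The step I expect to be the main obstacle is the bookkeeping of the stack state in these two arguments: one must be careful that $\pi(i)$ genuinely reaches the top when $\pi(j)$ arrives, and that in the forward direction no intervening entry disturbs the claimed order. The cleanest way to discharge this rigorously, and the route I would ultimately take, is to dispense with hand-tracing and use the recursive description $S(LnR)=S(L)\,S(R)\,n$, where $n$ is the maximal entry of $\pi=LnR$. With it the desired equivalence becomes a short induction on length: $\pi$ avoids $231$ if and only if both $L$ and $R$ avoid $231$ and every value in $L$ is smaller than every value in $R$ (the occurrences through $n$ being exactly the pairs $a\in L$, $b\in R$ with $a>b$), and the same decomposition shows that $S(\pi)=S(L)\,S(R)\,n$ avoids $21$ under precisely the same three conditions, since the trailing maximum $n$ can never be part of a $21$. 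The inductive hypothesis applied to $L$ and $R$ then closes the argument and confirms the marked-mesh computation above.
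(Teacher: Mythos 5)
Your proposal is correct, and it in fact contains two proofs. Its first half is essentially the paper's own argument: the paper likewise reduces $S(\pi)=\id$ to $S(\pi)$ avoiding $21$, notes that an inversion in $S(\pi)$ must already have been an inversion in $\pi$, and argues that keeping that inversion intact through the stack forces a larger entry to arrive between the two, i.e.\ an occurrence of the marked mesh pattern consisting of $21$ with the box $(1,2)$ marked, which is the classical pattern $231$; your two tracking arguments spell out both directions of this equivalence, including the converse that the paper leaves implicit in its ``equivalent to $231$'' step. Where you genuinely diverge is in the route you say you would ultimately take: induction on the decomposition $\pi=LnR$ around the maximum, using $S(LnR)=S(L)\,S(R)\,n$. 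The paper nowhere uses this recursion. Your inductive argument is the more self-contained of the two and cleanly disposes of the stack bookkeeping you rightly flag as the delicate point; its cost is that it is tailored to this particular statement. The recursion interacts transparently with avoidance of $21$ (the trailing maximum can never participate in an occurrence), but for the longer, shaded, and decorated target patterns treated later there is no comparably clean inductive characterization. The paper's dynamic argument, though terser, is deliberately the simplest instance of the general preimage method that it then runs for Theorem~\ref{thm:West} and Theorem~\ref{thm:W3s}, where exactly this popped-by-a-larger-element analysis is what produces the markings and shadings; that generality is what the paper's choice of proof buys, and what the recursion-based proof does not.
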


\subsubsection*{West-$2$-stack-sortable permutations}
We can similarly reprove West's result on West-$2$-stack sortable permutations, i.e., permutations $\pi$ such that $S^2(\pi) = \id$. By Knuth's result we know that $\pi$ will be sorted by two passes through the stack if and only if $S(\pi)$ avoids the pattern $231$. An occurrence of $231$ must have been either of the patterns
\begin{equation*} 
\pattern{scale=1}{ 3 }{ 1/2, 2/3, 3/1 }{}, \quad \pattern{scale=1}{ 3 }{ 1/3, 2/2, 3/1 }{}
\end{equation*}
in $\pi$.
Consider the pattern on the left. In order for the elements to stay in this order as they pass through the stack we must have an element in the box $(2,3)$ in order to pop the element corresponding to $3$ out of the stack before the smallest element enters. Now, the opposite happens for the pattern on the right. The $3$ must stay on the stack until $2$ enters, so there can be no elements in the box $(1,3)$ and then both $2$ and $3$ must leave the stack before $1$ enters. Thus the patterns above become the marked mesh patterns
\[
\patternsbm{scale=1}{ 3 }{ 1/2, 2/3, 3/1 }{}{2/3/3/4/\scriptscriptstyle{1}}, \quad \patternsbm{scale=1}{ 3 }{ 1/3, 2/2, 3/1 }{1/3}{2/3/3/4/\scriptscriptstyle{1}}.
\]
These are more naturally written as
\[
\pattern{scale=1}{ 4 }{ 1/2, 2/3, 3/4, 4/1 }{}, \quad \pattern{scale=1}{ 4 }{ 1/3, 2/2, 3/4, 4/1 }{1/3, 1/4}.
\]
The pattern on the right is equivalent to the pattern $\pattern{scale=0.75}{ 4 }{ 1/3, 2/2, 3/4, 4/1 }{1/4}$, in the sense that a permutation either contains both patterns or avoids both, by a lemma of Hilmarsson, J{\'o}nsd{\'o}ttir, Sigur{\dh}ard{\'o}ttir, Vi{\dh}arsd{\'o}ttir and {\'U}lfarsson~\cite{HJSVU11}. For these two patterns it is also easy to see directly that they are equivalent. As mentioned above this pattern is another representation of the barred pattern $3\bar{5}241$. Thus we have re-derived West's result.

\begin{theorem}[West~\cite{W90}] \label{thm:West}
A permutation $\pi$ is West-$2$-stack-sortable, i.e., $S^2(\pi) = \id$, if and only if it avoids the patterns $2341$ and $3\bar{5}241$.
\end{theorem}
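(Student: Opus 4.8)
The plan is to reduce to a single pass of the stack via Knuth's theorem and then characterize the preimages of $231$ under $S$ by a direct analysis of the stack dynamics, exactly in the style already used for the two warm-up results. First I would invoke Theorem~\ref{thm:Knuth}: $S^2(\pi)=\id$ if and only if $S(\pi)$ is itself stack-sortable, which holds precisely when $S(\pi)$ avoids $231$. So it suffices to determine which $\pi$ have $S(\pi)$ \emph{containing} an occurrence of $231$, and then negate.

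The heart of the argument is a local rule for how one pass reorders two entries. I would first record: for positions $i<j$ in $\pi$, if $\pi_i<\pi_j$ then $\pi_i$ precedes $\pi_j$ in $S(\pi)$, while if $\pi_i>\pi_j$ the two entries are reversed in $S(\pi)$ exactly when no entry strictly between positions $i$ and $j$ exceeds $\pi_i$ (equivalently, $\pi_i$ is still on the stack when $\pi_j$ is read, so $\pi_j$ is pushed above it and popped first). Now suppose $S(\pi)$ contains $231$, realized by entries of values $c<a<b$ appearing in $S(\pi)$ in the order $a,b,c$. Applying the rule to the pairs $(b,c)$ and $(a,c)$ forces $c$ to lie to the right of both $a$ and $b$ in $\pi$ (otherwise an ascending input pair would be preserved, contradicting the output order), and forces an entry larger than $b$ to sit between the $b$ and the $c$ in $\pi$, so that $b$ is popped before $c$ is read. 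The relative order of $a$ and $b$ in $\pi$ is then the only remaining freedom, yielding exactly two underlying classical patterns, $231$ and $321$. Each carries a marking demanding at least one entry in the box above and between its last two points; in the $321$ case the pair $(a,b)$ additionally forbids any entry taller than $b$ between them, shading one further box. These are precisely the two marked mesh patterns displayed just before the theorem.

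Finally I would translate these marked mesh patterns into ordinary patterns: promoting the forced tall entry to an explicit point turns the first into the classical $2341$ and the second into $3241$ with the top region between its first two points shaded (boxes $(1,3)$ and $(1,4)$). By the coincidence lemma of Hilmarsson et al.~\cite{HJSVU11}, or by the direct check the authors mention, this simplifies to the single shaded box $(1,4)$, which was identified earlier as the barred pattern $3\bar{5}241$. Negating, $S(\pi)$ avoids $231$ -- that is, $\pi$ is West-$2$-stack-sortable -- if and only if $\pi$ avoids both $2341$ and $3\bar{5}241$.

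The step I expect to be the main obstacle is the rigor of the local analysis underlying the passage to marked mesh patterns. One must verify not only \emph{necessity} -- that every output $231$ forces one of the two configurations -- but also \emph{sufficiency} -- that each configuration genuinely produces a $231$ in $S(\pi)$ irrespective of the other entries of $\pi$. This is exactly the point where the markings and shadings must be shown to encode precisely the right interference (and non-interference) from the remaining entries, since the single-pass rule depends on which entries lie between the relevant positions.
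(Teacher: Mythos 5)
Your proposal is correct and takes essentially the same route as the paper: reduce via Theorem~\ref{thm:Knuth} to the question of when $S(\pi)$ contains $231$, classify the two possible preimage configurations ($231$ and $321$) by tracking which inversions/non-inversions the stack preserves, attach the forced marking and shading to obtain the two marked mesh patterns, and translate these into $2341$ and $3\bar{5}241$ via the equivalence from~\cite{HJSVU11}. Your explicit ``local rule'' for pairwise reordering is just a formalization of the inversion analysis the paper carries out informally, and it also settles the sufficiency direction you flag as the main obstacle.
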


Recall that an \emph{inversion} in a permutation is an occurrence of the classical pattern $21$, while a \emph{non-inversion}
is an occurrence of $12$.
We now describe how given a classical pattern $p$ the method can be used to find a set of marked mesh patterns $P$ such that
\[
S^{-1}(\Av(p)) = \Av(P).
\]
There are two things to consider when choosing what classical patterns to start from.
\begin{enumerate}

\item If two elements in $p$ are part of an inversion, they must also be part of an inversion in all patterns in $P$.

\item If two elements in $p$ are part of a non-inversion, they can either be a non-inversion or an inversion in the patterns in $P$.

\end{enumerate}

\noindent
We now consider what shadings or markings must be added.
\begin{enumerate}

\item If $a > b$ form an inversion in a particular pattern $p'$ in $P$ and are supposed to come out of the stack as an inversion, then there must be another element $c > a$ that pops the $a$ out before $b$ is pushed onto the stack, thus maintaining the inversion. If such an element is present in $p'$ we need not do anything. If there is no such element we need to mark the region above $a$ and between $a$ and $b$ with a ``$1$''.

\item If $a > b$ is an inversion in $p'$ that must become a non-inversion in $p$, then we must make sure $a$ stays on the stack until $b$ arrives, and in order to do that we must shade all the boxes above $a$ and between $a$ and $b$. In particular there can be no other elements of $p'$ in this shaded region.
If $a < b$ is a non-inversion in $p'$ then it will still be a non-inversion in $p$ after sorting.

\end{enumerate}

This suffices to generate the set of marked mesh pattens $P$ from a classical pattern $p$. We will see later that if we start with a mesh pattern $p$ then it will be more difficult to find the set $P$ and we will be forced to use the decorated patterns we introduced above.

\subsection{The bubble-sort operator}

The bubble-sort operator is a very inefficient way of sorting a permutation. It works on adjacent locations in a permutation and if the left element is larger than the right one, they are swapped. For example, one pass of bubble-sort on the permutation $52134$ produces $21345$, since the $5$ will travel from the front to the back. As another example, bubble-sorting $521634$ produces $215346$. A slight modification of the method described above works equally well for this operator. Let $B(\pi)$ denote the output of one pass of bubble-sort on $\pi$. Consider for example permutations $\pi$ such that $B(\pi) = \id$. We see that $B(\pi)$ contains the pattern $21$ if and only if $\pi$ contains
\[
\pattern{scale=1}{ 2 }{ 1/2, 2/1 }{}.
\]
To make sure that these elements stay in this order, we either need a large element in front of the $2$, which would mean the $2$ would never be moved; or we need a large element in between the $2$ and the $1$ that will stop the $2$ from moving past $1$.
We arrive at the marked mesh pattern
\[
\patternsbm{scale=1}{ 2 }{ 1/2, 2/1}{}{0/2/2/3/\scriptscriptstyle{1}}.
\]
This pattern is equivalent to the two classical patterns $231$ and $321$. Albert, Atkinson, Bouvel, Claesson and Dukes~\cite{AABCD10} first made this result explicit, that $B(\pi) = \id$ if and only if $\pi$ avoids $231$ and $321$. In the same paper the authors show that for any classical pattern $p$ with at least three left-to-right maxima, the third of which is not the final symbol of $p$, the set $B^{-1}(\Av(p))$ is not a classical pattern class, i.e., not described by classical patterns. We consider the smallest example of such a pattern, $p = 1243$

\begin{proposition}
\[
B^{-1}(\Av(1243)) = \Av \left(
\patternsbm{scale=1}{ 4 }{ 1/1, 2/2, 3/4, 4/3 }{}{0/4/4/5/\scriptscriptstyle{1}},
\patternsbm{scale=1}{ 4 }{ 1/1, 2/4, 3/2, 4/3 }{0/4,1/4,2/4}{3/4/4/5/\scriptscriptstyle{1}},
\patternsbm{scale=1}{ 4 }{ 1/2, 2/1, 3/4, 4/3 }{0/2,0/3,0/4,1/2,1/3,1/4}{2/4/4/5/\scriptscriptstyle{1}},
\patternsbm{scale=1}{ 4 }{ 1/4, 2/1, 3/2, 4/3 }{0/4,1/4,2/4}{3/4/4/5/\scriptscriptstyle{1}}
\right).
\]
\end{proposition}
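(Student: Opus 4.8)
The plan rests on an explicit description of a single bubble-sort pass together with a pair-by-pair reading of the output. First I would record how $B$ acts on $\pi$: each left-to-right maximum of $\pi$ other than the largest entry is carried to the right and lands immediately in front of the next left-to-right maximum, the largest entry is carried to the last position, and every remaining entry slides exactly one step to the left. From this I extract two local rules for a pair $a,b$ of entries with $a$ to the left of $b$ in $\pi$. If $a<b$ (a non-inversion) then $a$ stays to the left of $b$ in $B(\pi)$, since a single pass never manufactures a new inversion. If $a>b$ (an inversion) then $a$ remains to the left of $b$ in $B(\pi)$ if and only if some entry larger than $a$ occurs somewhere to the left of $b$ in $\pi$; otherwise the pair is reversed. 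Because the relative orders of all six pairs determine the length-$4$ pattern that four chosen entries form, these two rules let me decide the output pattern of any four marked entries from purely local data in $\pi$.

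Next I would analyze an occurrence of $1243$ in $B(\pi)$. Label its four entries by their output positions and record their values: $O_1<O_2<O_4<O_3$, so that they leave bubble sort in the order $O_1O_2O_3O_4$ with values $1,2,4,3$. The only inversion among them in the output is $(O_3,O_4)$; since every output non-inversion was already a non-inversion in $\pi$, and an output inversion can only come from a $\pi$-inversion, this forces $O_4$ to be the rightmost of the four in $\pi$ and $O_3$ to precede $O_4$. Moreover $O_2$ and $O_3$ cannot both precede $O_1$ in $\pi$: if they did, the pair $(O_2,O_1)$ could not be reversed, because the larger entry $O_3$ would lie to its left, so $O_2$ would stay ahead of $O_1$, contradicting $1243$. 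Enumerating the admissible orders of $O_1,O_2,O_3$ in the first three slots then yields exactly the four classical patterns $1243$, $1423$, $2143$ and $4123$, the underlying patterns of the four marked mesh patterns in the statement; the two orders placing $O_1$ last among the three are ruled out.

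It remains to convert the local rules into the shadings and markings. In each of the four orders, any pair $(O_i,O_j)$ that is an inversion in $\pi$ but a non-inversion in the output must be \emph{reversed}, which by the second rule means that no entry exceeding the larger of the two may lie to its left up to the smaller one; this forced emptiness is exactly the shaded region of the corresponding picture. The surviving inversion $(O_3,O_4)$ must instead be \emph{preserved}, which by the same rule requires at least one entry larger than $O_3$ somewhere to the left of $O_4$; once the already-shaded region is excised, the only room left for such a witness is precisely the marked rectangle carrying the symbol $1$. Carrying this bookkeeping through the four cases produces an occurrence of one of the four patterns in the forward direction, and the very same rules, applied to an occurrence of any one of them (using that the shaded region is empty and the marked region is nonempty), show that its four distinguished entries emerge from $B$ as $1243$. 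This gives both inclusions and hence the claimed equality of sets.

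The delicate points, and the ones I would write out in full, are threefold: verifying that the empty and nonempty regions produced by the rules coincide exactly with the shaded boxes and marked rectangles drawn in the statement for each of the four cases; confirming that the two excluded orders of $O_1,O_2,O_3$ genuinely cannot arise; and checking the compatibility of the two requirements within each case, namely that forcing emptiness above $O_3$ in order to reverse an earlier inversion still leaves an admissible location for the witness that preserves $(O_3,O_4)$. This last compatibility is what pins the marked rectangle down to its precise stated position rather than to the full region to the left of $O_4$, and I expect it to be the main source of care in the argument.
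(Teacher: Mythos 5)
Your overall route is the same as the paper's, just written out in much more detail: extract from a single bubble-sort pass the pairwise rules (non-inversions are preserved; an inversion of $a$ over $b$ survives exactly when some entry larger than $a$ lies to the left of $b$ in $\pi$), enumerate the possible preimage orders of the four entries, and translate ``must be reversed'' into shaded (empty) regions and ``must survive'' into the marked rectangle. Those two rules are correct, the exclusion of the two orders with $O_1$ last is correct, and the translation into the four pictured patterns checks out in all four cases, in both directions.

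However, one step as written is false: the claim that ``every output non-inversion was already a non-inversion in $\pi$''. A bubble-sort pass turns inversions into non-inversions --- that is the whole point of sorting ($B(21)=12$) --- and indeed three of your own four patterns ($1423$, $2143$, $4123$) arise precisely from pairs that are inversions in $\pi$ and non-inversions in $B(\pi)$. Taken literally, your claim would force all five output non-inversions to already hold in $\pi$, so the only admissible preimage order would be $1243$, collapsing your own enumeration to a single pattern. The conclusion you draw from this claim --- that $O_4$ is rightmost among the four in $\pi$ --- is true, but it needs the correct argument: the output inversion $(O_3,O_4)$ forces $O_3$ to precede $O_4$ in $\pi$; if $O_1$ or $O_2$ lay to the right of $O_4$ in $\pi$, then $(O_4,O_i)$ would be a $\pi$-inversion that must be reversed, yet the larger entry $O_3>O_4$ lies to its left, so by your own survival rule it cannot be reversed, a contradiction. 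This is exactly the argument you already deploy to rule out the orders $2413$ and $4213$, so the repair is entirely local; with it, your proof is sound and amounts to a fleshed-out version of the paper's proof, which simply asserts that the four underlying classical patterns are ``easy to see'' and then justifies the shadings and markings with the same two rules you use.
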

\noindent
Note that all the patterns can be expanded to mesh patterns, but we would then have eight patterns instead of the four above.
\begin{proof}
It is easy to see that the underlying classical patterns are the only ones that are possible. Showing that the additional shadings and/or markings are necessary is similar for all of the patterns, so we just consider the third one. The $2$ must move past the $1$, which is achieved by adding the shading above it. The $4$ must not move past the $3$ so there must either be a large element in front of it, implying that it will never move, or between it and the $3$, implying that it will not move past the $3$.
\end{proof}

\section{Describing West-$3$-stack sortable permutations}
We now move on to the case of West-$3$-stack sortable permutations, i.e., permutations $\pi$ such that $S^3(\pi) = \id$. By West's result, stated in Theorem~\ref{thm:West}, we know that $\pi$ will be sorted by three passes through the stack if and only if $S(\pi)$ avoids the two patterns
\begin{equation*} 
W_1 = \pattern{scale=1}{ 4 }{ 1/2, 2/3, 3/4, 4/1 }{}, \quad W_2 = \pattern{scale=1}{ 4 }{ 1/3, 2/2, 3/4, 4/1 }{1/4}.
\end{equation*}
We will use the same method as we did above, but when we consider the pattern on the right, the shaded box will cause some complications and the decorated patterns introduced above will be necessary. We therefore consider the pattern on the left first.

\begin{lemma} \label{lem:I's}
Let $\pi$ be a permutation. An occurrence of $W_1$ in $S(\pi)$ comes from exactly one of the patterns below in $\pi$.
\begin{align*}
I_1 &= \pattern{scale=1}{ 5 }{ 1/2, 2/3, 3/4, 4/5, 5/1 }{}, \quad
I_2 = \pattern{scale=1}{ 5 }{ 1/2, 2/4, 3/3, 4/5, 5/1 }{2/4,2/5}, \quad
I_3 = \pattern{scale=1}{ 5 }{ 1/3, 2/2, 3/4, 4/5, 5/1 }{1/3,1/4,1/5}, \\
I_4 &= \pattern{scale=1}{ 5 }{ 1/4, 2/2, 3/3, 4/5, 5/1 }{1/4,1/5,2/4,2/5}, \quad
I_5 = \pattern{scale=1}{ 5 }{ 1/4, 2/3, 3/2, 4/5, 5/1 }{1/4,1/5,2/3,2/4,2/5}
\end{align*}
\end{lemma}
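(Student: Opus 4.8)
The plan is to read the five patterns directly off the stack-sorting dynamics, following the recipe of the previous subsection applied to the target pattern $p = W_1$. First I would fix an occurrence of $W_1$ in $S(\pi)$ and name its four entries $a,b,c,d$ so that they leave the stack in the order $a,b,c,d$ and satisfy $d < a < b < c$; thus $a,b,c$ realize the ``$234$'' and $d$ realizes the ``$1$''. The three pairs inside $\{a,b,c\}$ are non-inversions of the output, while each of $(a,d),(b,d),(c,d)$ is an inversion. By the two selection rules, the inversions must persist in $\pi$ -- so $d$ follows $a,b,c$ in $\pi$ -- whereas the relative order of $a,b,c$ in $\pi$ is a priori free, leaving $3! = 6$ candidate arrangements to analyze.

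Next I would explain why the patterns have length five rather than four. Since $c$ is the largest of the four entries yet must be output before the smaller entry $d$, the inversion $(c,d)$ can survive one pass only if some entry larger than $c$ pops $c$ off the stack before $d$ is pushed; this forces a fifth entry $e > c$ lying between $c$ and $d$ in $\pi$ (the ``$5$'' of each $I_j$), and the same $e$ keeps $(a,d)$ and $(b,d)$ inversions as well. I would then test the six orders of $a,b,c$ against the requirement that they leave the stack increasingly. The order in which these entries appear in $\pi$ as $b,c,a$ is impossible: the arrival of $c$ pops $b$ to the output before $a$ is ever emitted, so $b$ precedes $a$ in $S(\pi)$, a contradiction (in fact this arrangement produces a $W_2$, not a $W_1$). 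The remaining five orders give exactly the underlying classical patterns $23451, 24351, 32451, 42351, 43251$ of $I_1,\dots,I_5$.

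It then remains to insert the shadings. For each surviving arrangement I would apply the rule for non-inversions that have become inversions: whenever a pair inside $\{a,b,c\}$ that is increasing in the output appears decreasing in $\pi$, the larger (earlier) entry must remain on the stack until its partner arrives, which forces every cell directly above that entry, in the columns up to its partner, to be empty -- an entry there that exceeds it would pop it prematurely and corrupt the output order. Carrying this out case by case produces precisely the shaded regions of $I_2,\dots,I_5$, and no shading for the fully increasing $I_1$. I would also verify that nothing further needs shading, since an entry that is merely pushed on top of a waiting entry, or popped without displacing the four chosen entries from their output order, does no harm.

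Finally, for the ``exactly one'' claim I would note that the five patterns have pairwise distinct underlying permutations, differing only in the order of $a,b,c$. An occurrence of $W_1$ in $S(\pi)$ pins down the entries $a,b,c,d$ and their order in $\pi$, and the popper $e$ is determined as the entry whose arrival emits $c$, so the five responsible entries match one and only one $I_j$. The main obstacle I anticipate is the shading step: in each case one must argue both \emph{necessity} (an unshaded offending entry really does spoil the output) and \emph{sufficiency} (the listed shadings really do guarantee the output pattern), all while tracking the position of the popper $e$ and confirming that the single excluded arrangement is the only infeasible one.
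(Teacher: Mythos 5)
Your proposal is correct and takes essentially the same route as the paper: the fifth element is forced as the popper that ejects the larger entries before the $1$ arrives, the candidates are the orderings of the three middle entries (with the order $b,c,a$ excluded), and the shadings come from the rule that an inversion which must become a non-inversion forces the region above the larger entry, between it and its partner, to be empty. The paper is merely terser, working out the shading only for $I_5$ and declaring the other cases (and the enumeration of orderings, which you make explicit) ``easy to see.''
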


\noindent
It is easy to see that the patterns $231$, $W_1$, $I_1$ are part of a family, whose $k$-th member prohibits a permutation containing it from being West-$k$-stack sortable.

\begin{proof}
The element $5$ in all the patterns is added in to pop all the large elements out before the $1$ is pushed on the stack.
It is easy to see that the underlying classical patterns in the lemma, without the $5$, are the only possible candidates for producing an occurrence of $W_1$ in $S(\pi)$. Since proving that a particular shading must be applied to each of the patterns is similar, we only consider $I_5$. This pattern comes from the elements in the pattern $W_1$ having been arranged in the pattern
$
\pattern{scale=0.75}{ 4 }{ 1/4, 2/3, 3/2, 4/1 }{}
$
in $\pi$. In order for the elements to come out in the order we want, the $4$ must stay on the stack until both $3$ and $2$ have been pushed onto the stack. Also, $3$ must remain on the stack until $2$ arrives. We must therefore shade boxes $(1,3)$, $(1,4)$, $(2,2)$, $(2,3)$ and $(2,4)$. Finally, the elements $2$, $3$ and $4$, must leave the stack before $1$ arrives, so we must add an extra element, the $5$, in between $4$ and $1$.
\end{proof}

We now consider the pattern $W_2$, but without the shading.
\begin{lemma}
Let $\pi$ be a permutation. An occurrence of $\pattern{scale=0.75}{ 4 }{ 1/3, 2/2, 3/4, 4/1 }{}$ in $S(\pi)$ comes from exactly one of the patterns below in $\pi$.
\begin{align*}
j_1 = \patternsbm{scale=1}{ 5 }{ 1/3, 2/2, 3/4, 4/5, 5/1 }{}{1/3/2/6/\scriptscriptstyle{1}}, \quad
j_2 = \pattern{scale=1}{ 5 }{ 1/3, 2/4, 3/2, 4/5, 5/1 }{2/4,2/5}, \quad
j_3 = \patternsbm{scale=1}{ 5 }{ 1/4, 2/3, 3/2, 4/5, 5/1 }{1/4,1/5,2/4,2/5}{2/3/3/4/\scriptscriptstyle{1}}
\end{align*}
\end{lemma}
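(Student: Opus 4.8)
The plan is to follow the same template as the proof of Lemma~\ref{lem:I's}, applying the general procedure for classical patterns stated at the end of Section~\ref{subsec:stacksortop}. The structural fact I would lean on is that one pass through the stack can only remove inversions: if two values form an inversion in $S(\pi)$ (the larger occurring first) then they already form an inversion in $\pi$, whereas a non-inversion of $S(\pi)$ may come from either a non-inversion or an inversion of $\pi$.

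First I would pin down the underlying classical patterns. Writing the target $3241$ by values, its inversions, as value pairs with the larger value first, are $\{3,2\}$, $\{3,1\}$, $\{2,1\}$, $\{4,1\}$, and its non-inversions are $\{3,4\}$, $\{2,4\}$. By the fact above, the four entries producing an occurrence of $3241$ in $S(\pi)$ must already realize all four inversions in $\pi$, which forces $3$ before $2$ before $1$ together with $4$ before $1$, leaving only the placement of $4$ relative to $3$ and $2$ free. Since $4$ must precede $1$, it lies before $3$, between $3$ and $2$, or between $2$ and $1$, giving exactly the three orders $4321$, $3421$, $3241$, i.e.\ the underlying patterns of $j_3$, $j_2$, $j_1$. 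As in Lemma~\ref{lem:I's}, an extra largest element (the ``$5$'') is inserted just before the $1$ to pop $2,3,4$ off the stack before $1$ is pushed, so that $1$ is output last; this also services the inversion $\{4,1\}$, so no marking above the $4$ is ever needed.

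Next I would fix the shadings and markings case by case, using the two rules: maintaining an inversion requires a larger element strictly between its two entries to pop the first one out early, while forcing an inversion to become a non-inversion requires shading every box above the larger entry, between the two entries. For $j_1$ (order $3241$) the only inversion not already serviced by a present larger element is $\{3,2\}$, whose entries are consecutive with nothing larger between them, so a ``$1$'' mark must sit above $3$ and between $3$ and $2$; the remaining inversions are popped by the $4$ or the ``$5$'', and the two non-inversions persist automatically. For $j_2$ (order $3421$) the pair $\{4,2\}$ is an inversion in $\pi$ that must flip to the non-inversion $\{2,4\}$ in the output, so the second rule shades the boxes above $4$ between $4$ and $2$; here $\{3,2\}$ needs no mark, since the $4$ already sits between $3$ and $2$ and pops the $3$.

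The case I expect to be the main obstacle, and would treat most carefully, is $j_3$ (order $4321$), because there the shading and the marking interact. Both $\{4,3\}$ and $\{4,2\}$ are inversions in $\pi$ that must flip to non-inversions, forcing us to shade every box above $4$ between $4$ and $2$; yet $\{3,2\}$ must remain an inversion, which by the first rule demands a larger element strictly between $3$ and $2$. The delicate point is that this popping element cannot lie above $4$, that region now being shaded, so it is squeezed into the single box between the heights of $3$ and $4$ in the column between $3$ and $2$, which is exactly where the ``$1$'' mark of $j_3$ sits. Here it is essential to read boxes relative to the matched values rather than to consecutive integers, so that such an intermediate entry really can occur in $\pi$. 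I would close by noting that the three placements of $4$ are mutually exclusive and exhaustive, which yields the ``exactly one'' in the statement, and I would sanity-check each $j_i$ by running a minimal witnessing permutation through the stack and confirming that $3241$ appears, and that deleting the required element destroys it.
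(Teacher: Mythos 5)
Your proposal is correct and takes essentially the same route as the paper's proof: identify the three possible underlying classical orders (the paper's ``easy to see'' step, which you justify more explicitly via the fact that stack-sorting preserves non-inversions), note that the extra largest element must sit before the $1$ to pop $2,3,4$ out in time, and then derive each shading and marking case by case from the two rules of the general method, including the key interaction in $j_3$ where the shading above the $4$ squeezes the popping element for the $3$ into the single box $(2,3)$. The only difference is one of detail, not substance: you spell out the steps the paper compresses into references to Lemma~\ref{lem:I's} and the general procedure of Section~\ref{subsec:stacksortop}.
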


\begin{proof}
As in the proof of Lemma \ref{lem:I's} the $5$ is added in to pop out the large elements and it is easy to see that only the classical patterns underlying $j_1, j_2 and j_3$ (without the $5$) could possibly produce an occurrence of $3241$ in $S(\pi)$. 
\begin{enumerate}

\item For the classical pattern underlying $j_1$ to become $3241$ in $S(\pi)$ there must be an element in the marked region that pops the $3$ out of the stack.

\item Boxes $(2,4)$ and $(2,5)$ in the classical pattern underlying $j_2$ must be shaded to ensure that $3$ stays on the stack until $2$ is put on the stack.

\item The pattern $j_3$ comes from the elements in the pattern $W_2$ having been arranged in the pattern
$
\pattern{scale=0.75}{ 4 }{ 1/4, 2/3, 3/2, 4/1 }{}
$
in $\pi$. In order for the elements to come out in the order we want, the $4$ must stay on the stack until $2$ arrives and this explains the shaded boxes in $j_3$. Now the $3$ must be popped out before the $2$ arrives and that explains the marking in box $(2,3)$.\qedhere
\end{enumerate}
\end{proof}

We must now consider under what additional conditions the patterns $j_1$, $j_2$ and $j_3$ in the lemma will cause the correct shading in the pattern $W_2$. We express these conditions in the following lemma and two propositions.

\begin{lemma} \label{lem:j_3}
An occurrence of $j_3$ in a permutation $\pi$ will become an occurrence of $W_2$ in $S(\pi)$.
\end{lemma}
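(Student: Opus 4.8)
The plan is to take an occurrence of $j_3$ in $\pi$, name its entries by the role each plays, simulate the greedy stack-sort $S$ on them, and read off that the four ``decreasing'' entries emerge as a $3241$ whose shaded box is automatically empty.

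First I would fix notation for an occurrence. The underlying classical pattern of $j_3$ (after deleting the added ``$5$'') is $4321$; call its four entries $d_4 > d_3 > d_2 > d_1$, listed in increasing order of position. Let $b$ denote the added largest entry, which sits between $d_2$ and $d_1$, and let $m$ be an entry guaranteed by the marking of box $(2,3)$, so that $m$ lies between the positions of $d_3$ and $d_2$ and satisfies $d_3 < m < d_4$. The point of the shaded boxes $(1,4),(1,5),(2,4),(2,5)$ of $j_3$ is that no entry of $\pi$ exceeding $d_4$ occurs between the positions of $d_4$ and $d_2$.

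Next I would run the stack. Because no entry larger than $d_4$ occurs between the positions of $d_4$ and $d_2$, the entry $d_4$ is pushed and remains on the stack at least until $d_2$ has been pushed; call the pop triggered by the first entry larger than $d_4$ to its right the \emph{$d_4$-burst} (this trigger is $b$, or possibly some $g$ with $d_4 < g < b$ lying between $d_2$ and $b$). When $m$ arrives it pops $d_3$, since $d_3 < m < d_4$, so $d_3$ is output before $d_2$ is even processed. The entry $d_2$ then sits above $d_4$ on the stack and is popped, just before $d_4$, during the $d_4$-burst, while $d_1$ together with the large entries leaves only at the very end. Thus the images of $d_3, d_2, d_4, d_1$ occur in $S(\pi)$ in this left-to-right order, that is, as the classical pattern $3241$ underlying $W_2$; as in the preceding lemma, it is precisely the marked entry $m$ that forces $d_3$ out ahead of $d_2$ and so produces $3241$ rather than $2341$.

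The real content, and the only step I expect to be delicate, is checking that the shaded box $(1,4)$ of $W_2$ is empty, i.e.\ that no entry of $S(\pi)$ of value exceeding $d_4$ lies strictly between the images of $d_3$ and $d_2$. The key observation is that $d_3$ is output while $d_4$ is still on the stack, whereas $d_2$ is output exactly at the moment $d_4$ is popped; hence everything emitted strictly between $d_3$ and $d_2$ is emitted during the tenure of $d_4$ on the stack. Any such entry was either pushed before $d_4$, and so lies below it and can be output only after $d_4$ (hence after $d_2$), or was pushed after $d_4$, in which case it must be smaller than $d_4$ --- otherwise it would have displaced $d_4$ when pushed, contradicting that $d_4$ survives until the $d_4$-burst. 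Either way nothing larger than $d_4$ is output between $d_3$ and $d_2$, so box $(1,4)$ is unoccupied and the $3241$ we produced is a genuine occurrence of $W_2$. The one thing to be careful about is the possibility of an intermediate entry $g$ with $d_4 < g < b$ between $d_2$ and $b$: this merely moves the $d_4$-burst earlier, but $d_2$ and $d_4$ still leave in the same burst with $d_3$ already gone, so the emptiness argument is unaffected.
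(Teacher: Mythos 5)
Your proof is correct. There is nothing in the paper to compare it against: the author explicitly leaves the proof of Lemma~\ref{lem:j_3} to the reader. Your argument --- naming the entries of an occurrence of $j_3$, simulating the stack, and checking both the order $d_3,d_2,d_4,d_1$ and the emptiness of the shaded box of $W_2$ --- is exactly the verification the paper omits, and it is in the same spirit as the proofs of the neighbouring lemmas (the shaded boxes keep $d_4$ on the stack past $d_2$, the marked box ejects $d_3$ before $d_2$ arrives, and the added largest entry flushes $d_4$ out before $d_1$ enters).

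One imprecision is worth flagging, though it does not invalidate the argument. You assert that $d_2$ ``sits above $d_4$ on the stack and is popped, just before $d_4$, during the $d_4$-burst,'' and your final caveat only treats intermediate entries $g$ with $d_4<g<b$. But the columns of $j_3$ to the right of $d_2$ carry no shading, so $\pi$ may contain an entry $x$ with $d_2<x<d_4$ between $d_2$ and the trigger; such an $x$ pops $d_2$ strictly before the burst while leaving $d_4$ in place, so $d_2$ and $d_4$ need not leave in the same burst. (Similarly, $d_3$ may be popped by an entry arriving before $m$ rather than by $m$ itself.) Fortunately, your key emptiness argument only uses that $d_2$ is emitted no later than $d_4$ --- which is automatic, since $d_2$ is pushed while $d_4$ is still on the stack and hence lies above it --- so everything emitted strictly between $d_3$ and $d_2$ is still emitted during the tenure of $d_4$, and your dichotomy (pushed before $d_4$, hence emitted after it; or pushed during its tenure, hence smaller than it) applies verbatim. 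With that sentence weakened to ``$d_2$ is popped no later than $d_4$,'' the proof is complete and airtight.
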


\noindent
We leave the proof to the reader. We rename the pattern $J_3$ and note that it can also be expanded into a mesh pattern.
\[
J_3 = \patternsbm{scale=1}{ 5 }{ 1/4, 2/3, 3/2, 4/5, 5/1 }{1/4,1/5,2/4,2/5}{2/3/3/4/\scriptscriptstyle{1}}
= \pattern{scale=1}{ 6 }{ 1/5, 2/3, 3/4, 4/2, 5/6, 6/1 }{1/5,1/6,2/5,2/6,3/5,3/6}
\]

\begin{proposition} \label{prop:J2}
An occurrence of $j_2$ in a permutation $\pi$ will become an occurrence of $W_2$ in $S(\pi)$ if and only
if it is part of one of the patterns below, where the elements that have been added to the pattern $j_2$ are circled.
\begin{align*}
J_{2,1} &= \imopatternsbm{scale=1}{ 6 }{ 1/3, 2/4, 3/5, 4/2, 5/6, 6/1 }{}{2/4}{1/3,1/4,1/5,1/6,2/5,2/6,3/5,3/6}{}, \quad
J_{2,2} = \decpatternww{scale=1}{ 7 }{ 1/3, 2/4, 3/6, 4/5, 5/2, 6/7, 7/1 }{}{2/4,3/6}{0/5,1/3,1/4,1/5,1/6,1/7,2/5,2/6,2/7,3/6,3/7,4/5,4/6,4/7}{}{3/5/4/6/{\onetwo}}, \quad
J_{2,3} = \decpatternww{scale=1}{ 8 }{ 1/7, 2/3, 3/4, 4/6, 5/5, 6/2, 7/8, 8/1 }{}{1/7,3/4,4/6}{1/5,1/6,1/7,1/8,2/3,2/4,2/5,2/6,2/7,2/8,3/5,3/6,3/7,3/8,4/6,4/7,4/8,5/5,5/6,5/7,5/8}{}{4/5/5/6/{\onetwo}}, \\
J_{2,4} &= \decpatternww{scale=1}{ 8 }{ 1/8, 2/3, 3/4, 4/6, 5/5, 6/2, 7/7, 8/1 }{}{1/8,3/4,4/6}{1/5,1/6,1/7,1/8,2/3,2/4,2/5,2/6,2/7,2/8,3/5,3/6,3/7,3/8,4/6,4/7,4/8,5/5,5/6,5/7,5/8}{}{4/5/5/6/{\onetwo}}, \quad
J_{2,5} = \decpatternww{scale=1}{ 7 }{ 1/3, 2/4, 3/7, 4/5, 5/2, 6/6, 7/1 }{}{2/4,3/7}{0/5,0/6,1/3,1/4,1/5,1/6,1/7,2/5,2/6,2/7,3/7,4/5,4/6,4/7}{}{3/5/4/7/{\onetwo}}, \quad
J_{2,6} = \decpatternww{scale=1}{ 8 }{ 1/8, 2/3, 3/4, 4/7, 5/5, 6/2, 7/6, 8/1 }{}{1/8,3/4,4/7}{1/5,1/6,1/7,1/8,2/3,2/4,2/5,2/6,2/7,2/8,3/5,3/6,3/7,3/8,4/7,4/8,5/5,5/6,5/7,5/8}{}{4/5/5/7/{\onetwo}}, \\
J_{2,7} &= \decpatternww{scale=1}{ 6 }{ 1/3, 2/5, 3/4, 4/2, 5/6, 6/1 }{}{2/5}{0/4,1/3,1/4,1/5,1/6,2/5,2/6,3/4,3/5,3/6}{}{2/4/3/5/{\onetwo}}, \quad
J_{2,8} = \decpatternww{scale=1}{ 7 }{ 1/6, 2/3, 3/5, 4/4, 5/2, 6/7, 7/1 }{}{1/6,3/5}{1/4,1/5,1/6,1/7,2/3,2/4,2/5,2/6,2/7,3/5,3/6,3/7,4/4,4/5,4/6,4/7}{}{3/4/4/5/{\onetwo}}, \quad
J_{2,9} = \decpatternww{scale=1}{ 7 }{ 1/7, 2/3, 3/5, 4/4, 5/2, 6/6, 7/1 }{}{1/7,3/5}{1/4,1/5,1/6,1/7,2/3,2/4,2/5,2/6,2/7,3/5,3/6,3/7,4/4,4/5,4/6,4/7}{}{3/4/4/5/{\onetwo}}, \\
J_{2,10} &= \decpatternww{scale=1}{ 6 }{ 1/3, 2/6, 3/4, 4/2, 5/5, 6/1 }{}{2/6}{0/4,0/5,1/3,1/4,1/5,1/6,2/6,3/4,3/5,3/6}{}{2/4/3/6/{\onetwo}}, \quad
J_{2,11} = \decpatternww{scale=1}{ 7 }{ 1/7, 2/3, 3/6, 4/4, 5/2, 6/5, 7/1 }{}{1/7,3/6}{1/4,1/5,1/6,1/7,2/3,2/4,2/5,2/6,2/7,3/6,3/7,4/4,4/5,4/6,4/7}{}{3/4/4/6/{\onetwo}}, \quad
J_{2,12} = \pattern{scale=1}{ 5 }{ 1/3, 2/4, 3/2, 4/5, 5/1 }{1/3,1/4,1/5,2/4,2/5}.
\end{align*}
\end{proposition}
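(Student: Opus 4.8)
The plan is to push the four entries of $j_2$ that make up the output pattern through the stack, locate precisely where the forbidden region of $W_2$ sits inside $S(\pi)$, and then re-express the emptiness of that region as a containment condition on $\pi$. Write the $j_2$ occurrence as entries $a,b,c,d,e$ in increasing position with values $3,4,2,5,1$. As in the lemma producing the $j_i$, the value-$5$ entry $d$ serves only to pop the large entries out, and the output copy of $3241$ is formed by $a,b,c,e$. Simulating the stack on the bare occurrence sends these four out in the order $a,c,b,e$, so in $S(\pi)$ the roles ``$3$'', ``$2$'', ``$4$'', ``$1$'' are played by $a,c,b,e$, and in the bare case the ``$3$'' and the ``$2$'' are emitted consecutively. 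The shaded box $(1,4)$ of $W_2$ therefore forbids exactly the entries that are emitted strictly between $a$ and $c$ with value exceeding that of $b$. Hence the occurrence becomes a genuine $W_2$ precisely when this output window is empty, and the entire task is to describe that emptiness in terms of $\pi$.

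Next I would convert this output condition into an input condition using the two forward stack rules isolated in Section~\ref{subsec:stacksortop}: non-inversions are preserved, an inversion survives the pass only when a larger entry (a popper) separates it, and an entry leaves the stack exactly when the first later entry larger than it arrives (or at the end of the pass if there is none). The window in question opens when $b$ enters and closes when $d$ enters, and for a prospective intruding entry $f$ of value above $b$ I would work out, from its position relative to $a,b,c,d,e$ and the comparisons of its value, whether the stack can actually deposit $f$ into the window, and which further entries must then be present (as poppers) or absent (as blockers) for this to occur. This analysis is what forces the case split: each admissible location and comparison type of the controlling entries produces one of the templates $J_{2,1},\dots,J_{2,12}$.

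In reading off the templates, the circled (added) entries record the poppers or structural witnesses that the dynamics force to be present; the shadings record regions that must stay empty so that no stray high entry reaches the window; and the decorated regions carrying the \onetwo\ decoration (forcing a region to avoid $12$, that is, to be decreasing) record intervening blocks that the stack must eject as a single descending run, so that none of their entries is deposited in the window. I would then verify both implications. For the backward direction I would check, template by template, that the prescribed added entries, shadings and decorations force the stack to clear the window, so that any occurrence extending to some $J_{2,i}$ does yield $W_2$; for the forward direction I would argue that an empty window forces the controlling entries around the occurrence to realize exactly one of the enumerated configurations.

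The main obstacle is this dynamical bookkeeping rather than any single idea: one must thread each auxiliary entry through the stack relative to the five distinguished entries and then show that the list of twelve cases is simultaneously exhaustive and non-redundant. The feature that makes the argument genuinely harder than Lemma~\ref{lem:I's} and the $j_i$ lemma — and the reason decorated patterns become unavoidable here — is that whether a high entry reaches the window is governed not by the emptiness of a single box, nor by a marked count, but by the order type of an entire intervening block, which is exactly what the \onetwo\ decorations encode. I would expect the delicate points to be the boundary cases in which an auxiliary entry could usurp the role of the popper $d$, and I would check those separately to be sure the twelve templates neither overlap nor leave a gap.
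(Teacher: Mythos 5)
Your setup is correct and matches the paper's: in $S(\pi)$ the roles $3,2,4,1$ of $W_2$ are played by $a,c,b,e$, the shaded box of $W_2$ forbids exactly the entries emitted between $a$ and $c$ with value above $b$, and the $\onetwo$ decorations encode blocks whose entries must be decreasing so that the stack never deposits one of them in that window. But past this point everything that actually constitutes the proof is deferred (``I would work out\dots'', ``I would then verify\dots'', ``I would check those separately''). The content of the proposition \emph{is} the explicit list of twelve templates with their precise circled entries, shadings and decorations, together with the claim that this list is exhaustive; a proof must derive them, and your sketch derives none. Concretely, you are missing the two-stage organization that produces the list: first, split on which element pops $a$ (the ``$3$'') --- its value can lie between $a$ and $b$, between $b$ and $d$, above $d$, or the popper can be $b$ itself --- giving four intermediate patterns $j_{2,1},\dots,j_{2,4}$, the last of which is already $J_{2,12}$; second, for each of these, control the entries pushed onto the stack \emph{before} $a$ and still present when $a$ arrives, which forces either new shaded boxes (the $(0,4),(0,5)$ shadings of $J_{2,10}$) or a new circled shielding entry larger than everything that flushes the stack early (the ``$7$'' of $J_{2,11}$); and for $j_{2,1}$ a further three-way split on its decorated region (empty, only the lower box occupied, upper box occupied), each branch then reduced to the $j_{2,2}$ analysis, yields $J_{2,1},\dots,J_{2,6}$. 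Exhaustiveness of the twelve cases is precisely the accumulation of these finitely many alternatives; asserting that a case analysis ``produces one of the templates'' is not the same as exhibiting it.

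There is also a concrete error in your setup which, if the analysis were carried out as stated, would derail it: you claim the window ``opens when $b$ enters and closes when $d$ enters.'' By your own stack rule (an entry leaves when the first later larger entry arrives), this holds only for the bare occurrence and for $J_{2,12}$. In the other eleven templates the element popping $a$ is an added entry arriving before $b$, so the window opens strictly earlier, and entries emitted between that popper's arrival and $b$'s arrival --- for instance an ascent inside what becomes the decorated region of $J_{2,1}$ --- do land in the forbidden box. Symmetrically, $c$ may be popped before $d$ enters, and entries of value above $b$ emitted after $c$ but before $d$ are harmless, since box $(2,4)$ of $W_2$ is not shaded. Anchoring the window to the input times of $b$ and $d$ would therefore both miss configurations the proposition must cover and impose conditions it must not; the correct anchor is the one you stated a sentence earlier, from the emission of $a$ to the emission of $c$.
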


\begin{proof}
To ensure that there are no elements in the shaded box in $W_2$ we must look at the element that pops $3$ in $j_2$. There are four different possibilities.
\begin{align*}
j_{2,1} &= \decpatternww{scale=1}{ 6 }{ 1/3, 2/4, 3/5, 4/2, 5/6, 6/1 }{}{2/4}{1/3,1/4,1/5,1/6,3/5,3/6}{}{2/5/3/7/{\onetwo}}, \quad
j_{2,2} = \decpatternww{scale=1}{ 6 }{ 1/3, 2/5, 3/4, 4/2, 5/6, 6/1 }{}{2/5}{1/3,1/4,1/5,1/6,2/5,2/6,3/4,3/5,3/6}{}{2/4/3/5/{\onetwo}}, \quad
j_{2,3} = \decpatternww{scale=1}{ 6 }{ 1/3, 2/6, 3/4, 4/2, 5/5, 6/1 }{}{2/6}{1/3,1/4,1/5,1/6,2/6,3/4,3/5,3/6}{}{2/4/3/6/{\onetwo}}, \\
j_{2,4} &= \pattern{scale=1}{ 5 }{ 1/3, 2/4, 3/2, 4/5, 5/1 }{1/3,1/4,1/5,2/4,2/5}.
\end{align*}
We explain the shadings and the decoration of the pattern $j_{2,2}$ as the others are similar. For this pattern, the size of the element that popped the $3$ from the stack was in-between the $4$ and the $5$. Since this was the element that popped the $3$ there can be no elements in boxes $(1,3), \dotsc, (1,6)$. The boxes $(2,5)$ and $(2,6)$ can not contain an element, since that would pop out the element we just added (the $5$ in $j_{2,2}$) and this element would land in the shaded box in $W_2$. Now consider the decorated box $(2,4)$. It can contain elements, but none of them are allowed to leave the stack prior to the $4$ being pushed on, since any one of them would then land in the shaded box in $W_2$. Any elements in this region must then be in descending order, or equivalently, avoid the pattern $12$.

We also need to make sure that there elements that arrived on the stack prior to $3$ are not popped out and into the shaded region in $W_2$. We do the patterns in order of difficulty, which happens to be the reverse order.
\begin{enumerate}
\item Consider the pattern $j_{2,4}$. No additional shadings are necessary, and we rename this pattern $J_{2,12}$ for future reference.
\item Consider the pattern $j_{2,3}$. If there are elements in boxes $(0,4)$ and $(0,5)$ that are still on the stack when $3$ is pushed on they will be popped by the $6$ and will land in the shaded region in $W_2$. We must therefore have these boxes empty, or an element in box $(0,6)$ that pops everything out before $3$ is pushed on. We get the two patterns
\[
J_{2,10} = \decpatternww{scale=1}{ 6 }{ 1/3, 2/6, 3/4, 4/2, 5/5, 6/1 }{}{2/6}{0/4,0/5,1/3,1/4,1/5,1/6,2/6,3/4,3/5,3/6}{}{2/4/3/6/{\onetwo}}, \quad
J_{2,11} = \decpatternww{scale=1}{ 7 }{ 1/7, 2/3, 3/6, 4/4, 5/2, 6/5, 7/1 }{}{1/7,3/6}{1/4,1/5,1/6,1/7,2/3,2/4,2/5,2/6,2/7,3/6,3/7,4/4,4/5,4/6,4/7}{}{3/4/4/6/{\onetwo}}.
\]
\item Consider the pattern $j_{2,2}$. If there are elements in box $(0,4)$ that are still on the stack when $3$ is put on they will be popped by the $5$ and will land in the shaded region $W_2$. We must therefore have this box empty, or an element in box $(0,5)$ or $(0,6)$ that pops everything out before $3$ is pushed on. We get the three patterns
\[
J_{2,7} = \decpatternww{scale=1}{ 6 }{ 1/3, 2/5, 3/4, 4/2, 5/6, 6/1 }{}{2/5}{0/4,1/3,1/4,1/5,1/6,2/5,2/6,3/4,3/5,3/6}{}{2/4/3/5/{\onetwo}}, \quad
J_{2,8} = \decpatternww{scale=1}{ 7 }{ 1/6, 2/3, 3/5, 4/4, 5/2, 6/7, 7/1 }{}{1/6,3/5}{1/4,1/5,1/6,1/7,2/3,2/4,2/5,2/6,2/7,3/5,3/6,3/7,4/4,4/5,4/6,4/7}{}{3/4/4/5/{\onetwo}}, \quad
J_{2,9} = \decpatternww{scale=1}{ 7 }{ 1/7, 2/3, 3/5, 4/4, 5/2, 6/6, 7/1 }{}{1/7,3/5}{1/4,1/5,1/6,1/7,2/3,2/4,2/5,2/6,2/7,3/5,3/6,3/7,4/4,4/5,4/6,4/7}{}{3/4/4/5/{\onetwo}}.
\]
\item Consider the pattern $j_{2,1}$. If there are elements in boxes $(0,5)$ and $(0,6)$ then they must not be popped by the elements in the decorated region (boxes $(2,6)$ and $(2,7)$). We must therefore consider three cases:
\begin{itemize}
\item The decorated region is empty: In this case we do not need to worry about anything prior to $3$ and get the pattern
\[
J_{2,1} = \imopatternsbm{scale=1}{ 6 }{ 1/3, 2/4, 3/5, 4/2, 5/6, 6/1 }{}{2/4}{1/3,1/4,1/5,1/6,2/5,2/6,3/5,3/6}{}.
\]
\item The upper box of the decorated region is empty, but not the lower box: We choose the top-most element in the lower box and add it to the pattern.
\[
\decpatternww{scale=1}{ 7 }{ 1/3, 2/4, 3/6, 4/5, 5/2, 6/7, 7/1 }{}{2/4,3/6}{1/3,1/4,1/5,1/6,1/7,2/5,2/6,2/7,3/6,3/7,4/5,4/6,4/7}{}{3/5/4/6/{\onetwo}}
\]
We can now handle this pattern the same way we handled the pattern $j_{2,2}$ above and get three patterns.
\[
J_{2,2} = \decpatternww{scale=1}{ 7 }{ 1/3, 2/4, 3/6, 4/5, 5/2, 6/7, 7/1 }{}{2/4,3/6}{0/5,1/3,1/4,1/5,1/6,1/7,2/5,2/6,2/7,3/6,3/7,4/5,4/6,4/7}{}{3/5/4/6/{\onetwo}}, \quad
J_{2,3} = \decpatternww{scale=1}{ 8 }{ 1/7, 2/3, 3/4, 4/6, 5/5, 6/2, 7/8, 8/1 }{}{1/7,3/4,4/6}{1/5,1/6,1/7,1/8,2/3,2/4,2/5,2/6,2/7,2/8,3/5,3/6,3/7,3/8,4/6,4/7,4/8,5/5,5/6,5/7,5/8}{}{4/5/5/6/{\onetwo}}, \quad
J_{2,4} = \decpatternww{scale=1}{ 8 }{ 1/8, 2/3, 3/4, 4/6, 5/5, 6/2, 7/7, 8/1 }{}{1/8,3/4,4/6}{1/5,1/6,1/7,1/8,2/3,2/4,2/5,2/6,2/7,2/8,3/5,3/6,3/7,3/8,4/6,4/7,4/8,5/5,5/6,5/7,5/8}{}{4/5/5/6/{\onetwo}}
\]
\item The upper box of the decorated region is not empty: We choose the top-most element and add it to the pattern.
\[
\decpatternww{scale=1}{ 7 }{ 1/3, 2/4, 3/7, 4/5, 5/2, 6/6, 7/1 }{}{2/4,3/7}{1/3,1/4,1/5,1/6,1/7,2/5,2/6,2/7,3/7,4/5,4/6,4/7}{}{3/5/4/7/{\onetwo}}
\]
We can again handle this pattern the same way we handled the pattern $j_{2,2}$ above and get two patterns.
\[
J_{2,5} = \decpatternww{scale=1}{ 7 }{ 1/3, 2/4, 3/7, 4/5, 5/2, 6/6, 7/1 }{}{2/4,3/7}{0/5,0/6,1/3,1/4,1/5,1/6,1/7,2/5,2/6,2/7,3/7,4/5,4/6,4/7}{}{3/5/4/7/{\onetwo}}, \quad
J_{2,6} = \decpatternww{scale=1}{ 8 }{ 1/8, 2/3, 3/4, 4/7, 5/5, 6/2, 7/6, 8/1 }{}{1/8,3/4,4/7}{1/5,1/6,1/7,1/8,2/3,2/4,2/5,2/6,2/7,2/8,3/5,3/6,3/7,3/8,4/7,4/8,5/5,5/6,5/7,5/8}{}{4/5/5/7/{\onetwo}} \qedhere
\]
\end{itemize}
\end{enumerate}
\end{proof}

We now consider the last pattern, $j_1$, and what conditions must be imposed on it in order to get $W_2$ after sorting.
\begin{proposition} \label{prop:J1}
An occurrence of $j_1$ in a permutation $\pi$ will become an occurrence of $W_2$ in $S(\pi)$ if and only
if it is part of one of the patterns below, where the elements that have been added to the pattern $j_1$ are circled.
\begin{align*}
J_{1,1} &= \imopatternsbm{scale=1}{ 6 }{ 1/3, 2/4, 3/2, 4/5, 5/6, 6/1 }{}{2/4}{1/3,1/4,1/5,1/6,2/5,2/6}{}, \quad
J_{1,2} = \decpatternww{scale=1}{ 7 }{ 1/3, 2/4, 3/6, 4/2, 5/5, 6/7, 7/1 }{}{2/4,3/6}{0/5,1/3,1/4,1/5,1/6,1/7,2/5,2/6,2/7,3/6,3/7}{}{3/5/4/6/{\onetwo}}, \quad
J_{1,3} = \decpatternww{scale=1}{ 8 }{ 1/7, 2/3, 3/4, 4/6, 5/2, 6/5, 7/8, 8/1 }{}{1/7,3/4,4/6}{1/5,1/6,1/7,1/8,2/3,2/4,2/5,2/6,2/7,2/8,3/5,3/6,3/7,3/8,4/6,4/7,4/8}{}{4/5/5/6/{\onetwo}}, \\
J_{1,4} &= \decpatternww{scale=1}{ 8 }{ 1/8, 2/3, 3/4, 4/6, 5/2, 6/5, 7/7, 8/1 }{}{1/8,3/4,4/6}{1/5,1/6,1/7,1/8,2/3,2/4,2/5,2/6,2/7,2/8,3/5,3/6,3/7,3/8,4/6,4/7,4/8}{}{4/5/5/6/{\onetwo}}, \quad
J_{1,5} = \decpatternww{scale=1}{ 7 }{ 1/3, 2/4, 3/7, 4/2, 5/5, 6/6, 7/1 }{}{2/4,3/7}{0/5,0/6,1/3,1/4,1/5,1/6,1/7,2/5,2/6,2/7,3/7}{}{3/5/4/7/{\onetwo}}, \quad
J_{1,6} = \decpatternww{scale=1}{ 8 }{ 1/8, 2/3, 3/4, 4/7, 5/2, 6/5, 7/6, 8/1 }{}{1/8,3/4,4/7}{1/5,1/6,1/7,1/8,2/3,2/4,2/5,2/6,2/7,2/8,3/5,3/6,3/7,3/8,4/7,4/8}{}{4/5/5/7/{\onetwo}}, \\
J_{1,7} &= \decpatternww{scale=1}{ 6 }{ 1/3, 2/5, 3/2, 4/4, 5/6, 6/1 }{}{2/5}{0/4,1/3,1/4,1/5,1/6,2/5,2/6}{}{2/4/3/5/{\onetwo}}, \quad
J_{1,8} = \decpatternww{scale=1}{ 7 }{ 1/6, 2/3, 3/5, 4/2, 5/4, 6/7, 7/1 }{}{1/6,3/5}{1/4,1/5,1/6,1/7,2/3,2/4,2/5,2/6,2/7,3/5,3/6,3/7}{}{3/4/4/5/{\onetwo}}, \quad
J_{1,9} = \decpatternww{scale=1}{ 7 }{ 1/7, 2/3, 3/5, 4/2, 5/4, 6/6, 7/1 }{}{1/7,3/5}{1/4,1/5,1/6,1/7,2/3,2/4,2/5,2/6,2/7,3/5,3/6,3/7}{}{3/4/4/5/{\onetwo}}, \\
J_{1,10} &= \decpatternww{scale=1}{ 6 }{ 1/3, 2/6, 3/2, 4/4, 5/5, 6/1 }{}{2/6}{0/4,0/5,1/3,1/4,1/5,1/6,2/6}{}{2/4/3/6/{\onetwo}}, \quad
J_{1,11} = \decpatternww{scale=1}{ 7 }{ 1/7, 2/3, 3/6, 4/2, 5/4, 6/5, 7/1 }{}{1/7,3/6}{1/4,1/5,1/6,1/7,2/3,2/4,2/5,2/6,2/7,3/6,3/7}{}{3/4/4/6/{\onetwo}}.
\end{align*}
\end{proposition}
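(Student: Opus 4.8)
The plan is to follow the strategy used in the proof of Proposition~\ref{prop:J2}, taking advantage of the fact that the ``at least one'' marking in $j_1$ already records the entry $e$ that pops the $3$ out of the stack. In $j_1$ the entry immediately after the $3$ is the smaller entry $2$, so without a marked entry the underlying pattern $32451$ would sort to $2341$ rather than $3241$; the marking is exactly what forces the $3$ to leave the stack before the $2$. Thus $e$ is the first entry to the right of the $3$ whose value exceeds $3$, and it lies in the marked region. First I would split into three cases according to the value of $e$ relative to the $4$ and the $5$ of $j_1$: either $e$ lies between the $3$ and the $4$, or between the $4$ and the $5$, or above the $5$. These play the roles of $j_{2,1}$, $j_{2,2}$, $j_{2,3}$ in the previous proof; there is no analogue of $j_{2,4}$, since here the $3$ is never popped by a pattern entry.

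For each case the task is to guarantee that no entry of value greater than the $3$ is output between the $3$ and the $2$ in $S(\pi)$, as this is precisely the content of the shaded column of $W_2$. Such a bad entry can arise in two ways. First, an entry sitting on the stack just below the $3$ when $e$ arrives, and smaller than $e$, is popped by $e$ immediately after the $3$ and lands in the forbidden column; forbidding this forces the shadings directly above the $3$, and, when $e$ is small, a decorated region in which the surviving entries must avoid $12$ (equivalently, be decreasing) so that they are not later ejected ahead of the $2$. Second, an entry that arrived before the $3$ and is still on the stack, of value above the $3$, can be popped into the same column; this is controlled either by shading the boxes to the lower left of the $3$ so that they are empty, or by inserting a large pop-all entry to the left that clears the stack before the $3$ is pushed.

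I expect the case in which $e$ lies between the $3$ and the $4$ to be the main obstacle, exactly as $j_{2,1}$ was in Proposition~\ref{prop:J2}. Here $e$ is small and survives on the stack, so the decorated region may be nonempty and one must refine further according to whether that region is empty, whether only its lower part is occupied, or whether its upper part is occupied, in each case adjoining the top-most surviving entry to the pattern. This yields the six patterns $J_{1,1},\dotsc,J_{1,6}$. The case $e$ between the $4$ and the $5$ is treated exactly as $j_{2,2}$ and produces $J_{1,7},J_{1,8},J_{1,9}$, and the case $e$ above the $5$ is treated as $j_{2,3}$ and produces $J_{1,10}$ and $J_{1,11}$; together these account for the eleven patterns listed. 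The converse direction is immediate, since every added shading, the inherited marking, and every decorated region were forced by the requirement above.

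The delicate bookkeeping, and where I would be most careful, is verifying in each sub-case that the stated combination of hard shadings, the marking inherited from $j_1$, and the decorated $12$-region captures exactly the condition that the shaded column of $W_2$ remain empty after one pass through the stack, and that the case split is simultaneously exhaustive and non-redundant. Getting the boundaries of the decorated region right, and confirming that the three value-cases for $e$ refine into precisely $6+3+2$ patterns, is the step most prone to error.
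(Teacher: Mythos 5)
Your overall architecture is the same as the paper's: split on the value of the popping entry $e$ relative to the $4$ and the $5$ of $j_1$ (giving analogues $j_{1,1}$, $j_{1,2}$, $j_{1,3}$ of $j_{2,1}$, $j_{2,2}$, $j_{2,3}$, with no analogue of $j_{2,4}$), then repeat the derivation of Proposition~\ref{prop:J2} to obtain $6+3+2$ patterns. That is exactly how the paper proceeds. However, there is a genuine error in the criterion you use to drive all of the bookkeeping. You state that the task is to guarantee that ``no entry of value greater than the $3$ is output between the $3$ and the $2$,'' claiming this is precisely the content of the shaded box of $W_2$. It is not: $W_2 = (3241,\{(1,4)\})$, so the forbidden region lies above the \emph{4}, not above the $3$; entries with value between the $3$ and the $4$ may perfectly well be emitted between the $3$ and the $2$. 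This distinction is not vacuous: in the case where $e$ lies between the $3$ and the $4$, the pattern $J_{1,1}$ deliberately leaves unshaded the box between $e$ and the $2$ at heights between $e$ and the $4$; an entry there pops $e$ off the stack and ejects it into the column between the $3$ and the $2$, which is harmless precisely because $e$ sits below the $4$. Under your criterion such occurrences would be declared bad, you would be forced to add shading or decoration there, and the resulting list would be strictly more restrictive than the stated one, so the ``only if'' direction of the proposition would fail for the list you would derive.

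A second, related confusion is in your attribution of the shadings. The column of shaded boxes immediately to the right of the $3$ merely records that $e$ is the first entry to the right of the $3$ exceeding it (as you yourself observe at the start), not the prohibition on stack entries sitting below the $3$; and the decorated $12$-avoiding regions control entries pushed \emph{after} $e$ whose values lie above the $4$: two such entries forming a $12$ would cause the earlier one to be ejected, above the $4$, between the $3$ and the $2$. The entries that arrived before the $3$ and are still on the stack are a separate concern, and they are controlled by shading boxes to the \emph{upper} left of the $3$ (above the $4$), not its ``lower left,'' or by the added pop-all entry. None of this changes the shape of your case analysis, which is the correct one, but since you explicitly defer the proof of the proposition to this bookkeeping, the bookkeeping must be run with the correct forbidden region; as written, it would not reproduce the patterns $J_{1,1},\dotsc,J_{1,11}$.
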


\begin{proof}
To ensure that there are no elements in the shaded box in $W_2$ we must look at the element that pops $3$ in $j_1$. There are three different possibilities.
\begin{align*}
j_{1,1} &= \decpatternww{scale=1}{ 6 }{ 1/3, 2/4, 3/2, 4/5, 5/6, 6/1 }{}{2/4}{1/3,1/4,1/5,1/6}{}{2/5/3/7/{\onetwo}}, \quad
j_{1,2} = \decpatternww{scale=1}{ 6 }{ 1/3, 2/5, 3/2, 4/4, 5/6, 6/1 }{}{2/5}{1/3,1/4,1/5,1/6,2/5,2/6}{}{2/4/3/5/{\onetwo}}, \quad
j_{1,3} = \decpatternww{scale=1}{ 6 }{ 1/3, 2/6, 3/2, 4/4, 5/5, 6/1 }{}{2/6}{1/3,1/4,1/5,1/6,2/6}{}{2/4/3/6/{\onetwo}}.
\end{align*}
We also need to make sure that elements that arrived on the stack prior to $3$ are not popped out and into the shaded region in $W_2$. The derivation of the patterns in the proposition is identical to the proof of Proposition \ref{prop:J2}.
\end{proof}

Taken together, Lemmas \ref{lem:I's} and \ref{lem:j_3}, with Propositions \ref{prop:J2} and \ref{prop:J1} produce a list of $29$ patterns describing permutations that are not West-$3$-stack sortable. We can simplify this list considerably as follows:
Since the patterns $J_{1,1}, \dotsc, J_{1,6}$ all imply containment of $I_1$ we can remove them.
Similarly $J_{2,12}$ removes $J_{1,7}, \dotsc, J_{1,9}$; $I_1$ removes $J_{2,1}, \dotsc, J_{2,6}$; $I_2$ removes $J_{2,7}, \dotsc, J_{2,9}$ and $I_4$ removes $J_3$. 

We are left with the fact that a permutation is West-$3$-stack-sortable if and only if it avoids the patterns
\begin{align*}
I_1 &= \pattern{scale=1}{ 5 }{ 1/2, 2/3, 3/4, 4/5, 5/1 }{}, \quad
I_2 = \pattern{scale=1}{ 5 }{ 1/2, 2/4, 3/3, 4/5, 5/1 }{2/4,2/5}, \quad
I_3 = \pattern{scale=1}{ 5 }{ 1/3, 2/2, 3/4, 4/5, 5/1 }{1/3,1/4,1/5}, \quad
I_4 = \pattern{scale=1}{ 5 }{ 1/4, 2/2, 3/3, 4/5, 5/1 }{1/4,1/5,2/4,2/5}, \\
I_5 &= \pattern{scale=1}{ 5 }{ 1/4, 2/3, 3/2, 4/5, 5/1 }{1/4,1/5,2/3,2/4,2/5}, \quad
J_{2,12} = \pattern{scale=1}{ 5 }{ 1/3, 2/4, 3/2, 4/5, 5/1 }{1/3,1/4,1/5,2/4,2/5}, \\
J_{1,10} &= \decpatternww{scale=1}{ 6 }{ 1/3, 2/6, 3/2, 4/4, 5/5, 6/1 }{}{}{0/4,0/5,1/3,1/4,1/5,1/6,2/6}{}{2/4/3/6/{\onetwo}}, \quad
J_{2,10} = \decpatternww{scale=1}{ 6 }{ 1/3, 2/6, 3/4, 4/2, 5/5, 6/1 }{}{}{0/4,0/5,1/3,1/4,1/5,1/6,2/6,3/4,3/5,3/6}{}{2/4/3/6/{\onetwo}}, \\
J_{1,11} &= \decpatternww{scale=1}{ 7 }{ 1/7, 2/3, 3/6, 4/2, 5/4, 6/5, 7/1 }{}{}{1/4,1/5,1/6,1/7,2/3,2/4,2/5,2/6,2/7,3/6,3/7}{}{3/4/4/6/{\onetwo}}, \quad
J_{2,11} = \decpatternww{scale=1}{ 7 }{ 1/7, 2/3, 3/6, 4/4, 5/2, 6/5, 7/1 }{}{}{1/4,1/5,1/6,1/7,2/3,2/4,2/5,2/6,2/7,3/6,3/7,4/4,4/5,4/6,4/7}{}{3/4/4/6/{\onetwo}}.
\end{align*}

If the decorated region in $J_{2,10}$ contains an element in the lower box then it will produce an occurrence of $I_5$.
The same is true of $J_{2,11}$. We can therefore replace these patterns with
\[
J_{2,10} = \decpatternww{scale=1}{ 6 }{ 1/3, 2/6, 3/4, 4/2, 5/5, 6/1 }{}{}{0/4,0/5,1/3,1/4,1/5,1/6,2/4,2/6,3/4,3/5,3/6}{}{2/5/3/6/{\onetwo}}, \quad
J_{2,11} = \decpatternww{scale=1}{ 7 }{ 1/7, 2/3, 3/6, 4/4, 5/2, 6/5, 7/1 }{}{}{1/4,1/5,1/6,1/7,2/3,2/4,2/5,2/6,2/7,3/4,3/6,3/7,4/4,4/5,4/6,4/7}{}{3/5/4/6/{\onetwo}}.
\]
(Keeping the names of the patterns unchanged.)
The pattern $J_{1,10}$ can be simplified as well. The lower box in the decorated region can contain at most one element, since otherwise we would have an occurrence of $I_2$. We begin by replacing it with
\[
\decpatternww{scale=1}{ 6 }{ 1/3, 2/6, 3/2, 4/4, 5/5, 6/1 }{}{}{0/4,0/5,1/3,1/4,1/5,1/6,2/4,2/6}{}{2/5/3/6/{\onetwo}}, \quad
\decpatternww{scale=1}{ 7 }{ 1/3, 2/7, 3/5, 4/2, 5/4, 6/6, 7/1 }{}{}{0/4,0/5,0/6,1/3,1/4,1/5,1/6,1/7,2/4,2/5,2/7,3/4,3/5,3/6,3/7}{}{2/6/3/7/{\onetwo}}
\]
and notice that the second pattern above implies containment of $J_{2,10}$. We therefore replace $J_{1,10}$ with the first pattern.

The pattern $J_{1,11}$ can be simplified in the same way. The lower box in the decorated region can contain at most one element, since otherwise we would have an occurrence of $I_2$. We begin by replacing it with
\[
\decpatternww{scale=1}{ 7 }{ 1/7, 2/3, 3/6, 4/2, 5/4, 6/5, 7/1 }{}{}{1/4,1/5,1/6,1/7,2/3,2/4,2/5,2/6,2/7,3/4,3/6,3/7}{}{3/5/4/6/{\onetwo}}, \quad
\decpatternww{scale=1}{ 8 }{ 1/8, 2/3, 3/7, 4/5, 5/2, 6/4, 7/6, 8/1 }{}{}{1/4,1/5,1/6,1/7,1/8,2/3,2/4,2/5,2/6,2/7,2/8,3/4,3/5,3/7,3/8,4/4,4/5,4/6,4/7,4/8}{}{3/6/4/7/{\onetwo}}
\]
and notice that the second pattern above implies containment of $J_{2,11}$. We therefore replace $J_{1,11}$ with the first pattern.

We have therefore proven the following.
\begin{theorem} \label{thm:W3s}
A permutation $\pi$ is West-$3$-stack-sortable, i.e., $S^3(\pi) = \id$, if and only if it avoids the following decorated patterns
\begin{align*}
I_1 &= \pattern{scale=1}{ 5 }{ 1/2, 2/3, 3/4, 4/5, 5/1 }{}, \quad
I_2 = \pattern{scale=1}{ 5 }{ 1/2, 2/4, 3/3, 4/5, 5/1 }{2/4,2/5}, \quad
I_3 = \pattern{scale=1}{ 5 }{ 1/3, 2/2, 3/4, 4/5, 5/1 }{1/3,1/4,1/5}, \quad
I_4 = \pattern{scale=1}{ 5 }{ 1/4, 2/2, 3/3, 4/5, 5/1 }{1/4,1/5,2/4,2/5}, \\
I_5 &= \pattern{scale=1}{ 5 }{ 1/4, 2/3, 3/2, 4/5, 5/1 }{1/4,1/5,2/3,2/4,2/5}, \quad
J_{2,12} = \pattern{scale=1}{ 5 }{ 1/3, 2/4, 3/2, 4/5, 5/1 }{1/3,1/4,1/5,2/4,2/5}, \\
J_{1,10} &= \decpatternww{scale=1}{ 6 }{ 1/3, 2/6, 3/2, 4/4, 5/5, 6/1 }{}{}{0/4,0/5,1/3,1/4,1/5,1/6,2/4,2/6}{}{2/5/3/6/{\onetwo}}, \quad
J_{2,10} = \decpatternww{scale=1}{ 6 }{ 1/3, 2/6, 3/4, 4/2, 5/5, 6/1 }{}{}{0/4,0/5,1/3,1/4,1/5,1/6,2/4,2/6,3/4,3/5,3/6}{}{2/5/3/6/{\onetwo}}, \\
J_{1,11} &= \decpatternww{scale=1}{ 7 }{ 1/7, 2/3, 3/6, 4/2, 5/4, 6/5, 7/1 }{}{}{1/4,1/5,1/6,1/7,2/3,2/4,2/5,2/6,2/7,3/4,3/6,3/7}{}{3/5/4/6/{\onetwo}}, \quad
J_{2,11} = \decpatternww{scale=1}{ 7 }{ 1/7, 2/3, 3/6, 4/4, 5/2, 6/5, 7/1 }{}{}{1/4,1/5,1/6,1/7,2/3,2/4,2/5,2/6,2/7,3/4,3/6,3/7,4/4,4/5,4/6,4/7}{}{3/5/4/6/{\onetwo}}.
\end{align*}
\end{theorem}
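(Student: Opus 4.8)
The plan is to assemble the preimage computations already carried out and then prune the resulting list using pattern containment. First I would invoke West's theorem (Theorem~\ref{thm:West}): a permutation $\pi$ satisfies $S^3(\pi)=\id$ if and only if $S(\pi)$ is West-$2$-stack-sortable, which by that theorem means $S(\pi)$ avoids $W_1$ and $W_2$. Hence $\pi$ fails to be West-$3$-stack-sortable exactly when $S(\pi)$ contains $W_1$ or $W_2$, and it suffices to describe, as conditions on $\pi$, when each of these occurs after one pass through the stack.

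For $W_1$, Lemma~\ref{lem:I's} already states that an occurrence of $W_1$ in $S(\pi)$ arises from exactly one of $I_1,\dotsc,I_5$ in $\pi$; so $S(\pi)$ contains $W_1$ if and only if $\pi$ contains one of these five patterns. For $W_2$ I would first strip its shading to the underlying classical pattern $3241$, whose preimages are $j_1,j_2,j_3$; the shading of $W_2$ is then reinstated through Lemma~\ref{lem:j_3} (yielding $J_3$), Proposition~\ref{prop:J2} (yielding $J_{2,1},\dotsc,J_{2,12}$) and Proposition~\ref{prop:J1} (yielding $J_{1,1},\dotsc,J_{1,11}$). Collecting everything, $\pi$ is not West-$3$-stack-sortable if and only if it contains one of the $29$ patterns $I_1,\dotsc,I_5$, $J_3$, $J_{2,1},\dotsc,J_{2,12}$, $J_{1,1},\dotsc,J_{1,11}$.

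The real work is the reduction of this list to the ten patterns in the statement, accomplished by pattern-containment arguments. For each redundant pattern I would exhibit one of the retained patterns as a (decorated) subpattern, so that any permutation containing the former contains the latter: $J_{1,1},\dotsc,J_{1,6}$ and $J_{2,1},\dotsc,J_{2,6}$ all contain $I_1$; $J_{1,7},\dotsc,J_{1,9}$ contain $J_{2,12}$; $J_{2,7},\dotsc,J_{2,9}$ contain $I_2$; and $J_3$ contains $I_4$. This leaves $I_1,\dotsc,I_5$, $J_{2,12}$, and the four decorated patterns $J_{1,10},J_{2,10},J_{1,11},J_{2,11}$. The final step rewrites the decorations of these four: in $J_{2,10}$ and $J_{2,11}$ any element in the lower box of the decorated region creates an occurrence of $I_5$, so that box may simply be shaded; in $J_{1,10}$ and $J_{1,11}$ the lower box can hold at most one element before $I_2$ appears, and the single-element case is absorbed because it forces containment of $J_{2,10}$ (respectively $J_{2,11}$), leaving the simpler forms displayed in the theorem.

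The main obstacle is precisely these containment verifications. For the classical patterns it is routine subpattern checking, but for the decorated patterns one must confirm that the claimed sub-occurrence respects the decorated region, that is, that the required descending configuration does not secretly violate the avoid-$12$ condition, and that an intruding element in a decorated box genuinely completes an $I_2$ or $I_5$ together with the ambient entries. Keeping careful track of which shaded and decorated regions of the larger pattern lie over the active cells of the smaller one is where the care is needed; everything else is bookkeeping on the $29$-element list.
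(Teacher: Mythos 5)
Your proposal is correct and follows essentially the same route as the paper: West's theorem plus Lemma~\ref{lem:I's}, Lemma~\ref{lem:j_3}, and Propositions~\ref{prop:J2} and~\ref{prop:J1} give the $29$-pattern list, which is then pruned by exactly the containments the paper uses ($I_1$, $J_{2,12}$, $I_2$, $I_4$ eliminating the redundant patterns, and $I_5$, $I_2$, $J_{2,10}$, $J_{2,11}$ simplifying the decorated regions).
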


\section{Open problems} \label{sec:opprob}
Above we only had use for decorated patterns where the entries in certain boxes needed to avoid a particular pattern. One might guess that
in some situations it could be useful to require the entries to contain a particular pattern instead. Furthermore, one could go as far as assigning a set $X_{(i,j)}$ to the entries in box $(i,j)$ and then placing restrictions on those sets, such as $X_{(1,1)} = \emptyset$,
$\# X_{(2,3)} \geq 2$, $X_{(0,1)}$ avoids $123$, $X_{(3,2)} \cup X_{(3,3)}$ contains the pattern $21$ etc. For a concrete example, note that a fixed point is an
occurrence of the pattern $(1, \#X_{(0,1)} = \#X_{(1,0)})$.
\newline

We end with some open problems related to the above.

\begin{enumerate}
\item The number of West-$2$-stack-sortable permutations was conjectured by West~\cite{W90} to be $2(3n)!/((n+1)!(2n+1)!)$.
This conjecture was proven by Zeilberger~\cite{Z92}. Later Dulucq, Gire and West~\cite{DGW96} found these permutations to be in bijection with rooted non-separable planar maps. The enumeration of West-$3$-stack-sortable permutations is still completely open.

\item One might hope the method we described above for finding a set of patterns $P$ such that $S^{-1}(\Av(p)) = \Av(P)$ could be turned into an algorithm. This is definitely feasible for classical patterns $p$, but might be difficult for mesh patterns $p$.

\item Our method was shown to work as well with the bubble-sort operator. Hopefully it can be applied to other similar sorting operators.

\item \label{opprob:2stack} We noted in subsection \ref{subsec:stacksortop} that there is another set of permutations called the $2$-stack-sortable permutations. It is known that classical patterns suffice to describe this set but it is not known which classical patterns. See Albert, Atkinson and Linton~\cite{MR2601797} for recent work on this.
\end{enumerate}

\noindent
The author hopes to pursue each of the open problems mentioned above in future work.

\subsection*{Acknowledgements}
The author would like to thank Anders Claesson for suggesting the problem of describing West-$3$-stack-sortable permutations, as well as for help with testing Theorem \ref{thm:W3s} with a computer. The author would also like to thank Christian Krattenthaler and an anonymous referee for detailed
comments. The author is supported by grant no.\ 090038013 from the Icelandic Research Fund.

\bibliographystyle{amsplain}
\bibliography{skeletonW3s}

\end{document}